\theoremstyle{plain}
\newtheorem{theorem}{Theorem}
\newtheorem{corollary}{Corollary}[section]
\newtheorem{lemma}[corollary]{Lemma}
\newtheorem{proposition}[corollary]{Proposition}
\newtheorem{theoremprim}{Theorem}
\theoremstyle{definition}
\theoremstyle{remark}
\newtheorem{remark}[corollary]{Remark}
\newcommand{\NN}{\mathbb{N}}
\newcommand{\ZZ}{\mathbb{Z}}
\newcommand{\RR}{\mathbb{R}}
\newcommand{\CC}{\mathbb{C} }
\newcommand{\Set}[1]{\left\{ #1  \right\}}
\newcommand{\SetSuchThat}[2]{\left\{\, #1 \ | \ #2 \, \right\}}
\newcommand{\Span}{\operatorname{span}}
\newcommand{\Mod}{\, \mathrm{mod}\, }
\newcommand{\ScalarProduct}[3]{(#1| #2)_{#3}}
\newcommand{\TangentFields}[1]{\Gamma(#1)}
\newcommand{\LieAlgebra}[1]{\mathfrak{#1}}
\newcommand{\VectorFlow}[1]{\exp\left(#1\right)}
\newcommand{\Manifold}{\mathrm{M}}
\newcommand{\ShuffleWeightedProduct}{\cshuffle}
\newcommand{\ShuffleProduct}{\shuffle}
\newcommand{\EmptyWord}{\mathrm{1}}
\newcommand{\ExpShuffle}[1]{\exp_{\ShuffleProduct}(#1)}
\newcommand{\Antipode}[1]{{#1}^*}
\newcommand{\FlipHom}{F}
\newcommand{\Lie}[1]{\mathrm{Lie}(#1)}
\newcommand{\Ideal}[1]{\mathrm{I}(#1)}
\newcommand{\Polynomials}[2]{#1\langle#2\rangle}
\newcommand{\Series}[2]{#1\langle\langle#2\rangle\rangle}
\newcommand{\SeriesShuffle}[2]{{#1}_{\ShuffleProduct}\langle\langle#2\rangle\rangle}
\newcommand{\WordsInSeriesOfLength}[2]{S_{#2}^{#1}}
\newcommand{\AOne}{\LieAlgebra{A}_1}
\newcommand{\aOne}{\LieAlgebra{a}_1}
\newcommand{\FreeToTangent}{X}
\newcommand{\FreeMagma}[1]{\mathrm{M}(#1)}
\newcommand{\GoodExpressions}[1]{\mathrm{E}(#1)}
\newcommand{\MainSeries}{S}
\newcommand{\AOneSeries}{S_{\LieAlgebra{a}_1}}
\newcommand{\DecreasingProduct}{\overleftarrow\prod}
\newcommand{\OddSetC}[1]{\Delta^{#1}}
\newcommand{\OddHallSetC}[1]{\HallSetOfLength{odd}{#1}}
\newcommand{\ChronAlgIterInt}[2]{\mathrm{II}_{[0,#2]}(#1)}
\newcommand{\ChronProduct}{*}
\newcommand{\IterIntegral}[2]{\Phi_{#1}(#2)}
\newcommand{\Letters}{\mathrm{A}}
\newcommand{\HallWordsOfLength}[2]{\mathrm{W}_{#2}^{#1}}
\newcommand{\WordsOfLength}[1]{\Letters^*_{#1}}
\newcommand{\HallSetOfLength}[2]{\mathrm{H}_{#2}^{#1}}
\newcommand{\HallSetAOne}[1]{\mathrm{H}^{\LieAlgebra{a}_1}_{#1}}
\newcommand{\LettersGeneral}{\mathrm{'\! A}}
\newcommand{\HallWordsOfLengthGeneral}[2]{\mathrm{' W}_{#2}^{#1}}
\newcommand{\WordsOfLengthGeneral}[1]{\LettersGeneral^*_{#1}}
\newcommand{\HallBasisOfLengthGeneral}[2]{\mathrm{' B}_{#2}^{#1}}
\newcommand{\HallSetOfLengthGeneral}[2]{\mathrm{' H}_{#2}^{#1}}
\newcommand{\HallPol}[1]{P_{#1}}
\newcommand{\HallDualPol}[1]{\xi_{#1}}
\newcommand{\FunctionGamma}[1]{\alpha_{#1}}
\newcommand{\FunctionXi}[1]{s_{#1}}
\newcommand{\Foliage}[1]{f(#1)}
\begin{document}

\title[Solution of $\LieAlgebra{a}_1$-type Lie-Scheffers system and Riccati equation]{Explicit solutions of the $\LieAlgebra{a}_1$-type Lie-Scheffers system and a general  Riccati equation}

\author[G. Pietrzkowski]{Gabriel Pietrzkowski 
}

\address{Institute of Mathematics, Polish Academy of Sciences; \' Sniadeckich 8,
00-956 Warszawa, Poland; \phone +48 22 5544524.
}
\email{G.Pietrzkowski@impan.pl}

\begin{abstract}
For a general differential system $\dot x(t) = \sum_{d=1}^3 u_d(t)X_d$, where $X_d$ generates the simple Lie algebra of type $\LieAlgebra{a}_1$, we compute the explicit solution in terms of iterated integrals of products of $u_d$'s. As a byproduct we obtain the  solution of a general Riccati equation by infinite quadratures.
\end{abstract}

\subjclass[2010]{17B80, 34A05, 34A26}

\keywords{free Lie algebra, shuffle product, special linear algebra, Riccati equation, Lie-Sheffers system}

\maketitle

\section{Introduction}
\label{sec:Introduction}

Let $\Manifold$ be a finite dimensional manifold. Let $X_a, X_b, X_c \in \TangentFields{\Manifold}$ be smooth tangent vector fields on $\Manifold$, generating the $\LieAlgebra{a}_1$-type Lie subalgebra, of the Lie algebra $\TangentFields{\Manifold}$, such that
\begin{align}
\label{eq:a1LieBrackets}
[X_a,X_b] &= 2X_a, & [X_a, X_c] &= -X_b, & [X_b, X_c] = 2 X_c,
\end{align}
where $[\cdot,\cdot]:\TangentFields{\Manifold}\otimes\TangentFields{\Manifold} \to\TangentFields{\Manifold}$ is a standard Lie bracket of tangent vector fields. Now for arbitrary measurable functions $u_a, u_b, u_c :[0,T]\to\RR$, consider a differential equation on $\Manifold$ defined as follows:
\begin{align}
\label{eq:DifferentialEquation}
\begin{split}
\dot x(t) &= u_c(t) X_c + u_b(t) X_b + u_a(t) X_a, \\
x(0) &= x_0 \in \Manifold.
\end{split}
\end{align}
The goal is to write a general (local) solution for this system in terms of flows of $X_a, X_b, X_c \in \TangentFields{\Manifold}$. This solution will  depend explicitly on the functions $u_a, u_b, u_c$ and their iterated integrals only. 

The motivation to consider such a problem comes from many directions.
For example, when $u_a, u_b$ and $u_c$ are constrained so as to define a control system, the solution described above gives rise to an explicit formula for the endpoint mapping of the control system \cite{Agrachev04Control}. 

Another example is when $\Manifold = \mathrm{SL}(2)$ is the special linear group, and $X_d$ are left invariant vector fields. In this case the connections with Riccati equation are well known (see \cite{Redheffer56Solutions,Redheffer57Riccati} and \cite{Carinena07Riccati,Carinena11Riccati}), as well as the subsequent applications to the study of Milne-Pinney equations (see \cite{Carinena09MilnePinney}), Ermakov systems (see \cite{Carinena08Ermakov}), and harmonic oscillators (see \cite{Carinena10HarmonicOscillator}). In particular our approach gives rise to the explicit solution of a general Riccati equation (see Theorem \ref{thm:Riccati}).

The advantage of our approach is that it can be generalized to systems of any simple-lie-algebra-type, in particular it will be very interesting to write out similar solutions for rank-two simple Lie algebras.

In order to solve the stated problem, we use the theorem of Kawski and Sussmann \cite{Kawski97NoncommutativePower}. The origins of their solution, come from the works of KT Chen \cite{Chen54Iterated,Chen57Integration,Chen68Algebraic} on iterated integration of paths, also called algebraic paths, and later application of Chen's results to control systems by Fliess \cite{Fliess81Fonctionnelles}. An important step was given by Sussmann \cite{Sussman86ProductExpansion}, and in strictly algebraic fashion by Melan{\c{c}}on and Reutenauer \cite{Melancon89LyndonWords}, where he expressed his solution of non-linear control-affine system, in terms of a Lyndon basis. This was then generalized in Reutenauer's book \cite{Reutenauer93FreeLie} (who actually claims that the original idea comes from Sch\"{u}tzenberger and Malan{\c{c}}on), and rewritten in a control-theoretical setting by Kawski and Sussmann \cite{Kawski97NoncommutativePower}.

The article is organized as follows. In section 2 we formulate the results; in section 3 we collect important preliminary facts from the theory of free Lie algebras and its connection with integration of differential systems; in section 4 we introduce a Hall set/basis adjusted to our problem, which allows us to prove our results in section 5. Finally, we conclude the paper in section 6.

\section{Results}
\label{sec:Results}

Before we state the main theorem, we need a few definitions. Let $\Letters = \Set{a, b, c}$ be a set of letters.  By $\WordsOfLength{n}$ denote the set of words composed from $n$ letters, i.e. 
$$
\WordsOfLength{n} = \SetSuchThat{a_1 a_2\cdots a_n}{\ a_1,\ldots,a_n\in\Letters},
$$ 
and by $\WordsOfLength{} = \bigcup_{n\in\NN\cup\Set{0}} \WordsOfLength{n}$ the set of all words. In particular $\WordsOfLength{0} = \Set{\EmptyWord}$ contains only one -- empty -- word. It is important to note that $\WordsOfLength{}$, with an associative, noncommutative product (called the concatenation product) given by juxtaposition $(v,w)\mapsto vw$, and the neutral element $\EmptyWord$, is a monoid.
We denote by $|w|$ the length of $w\in\WordsOfLength{}$, i.e. $|w| = n$ for $w\in \WordsOfLength{n}$, and by $|w|_d$ the number of occurrences of the letter $d$ in the word $w\in\WordsOfLength{}$.

The concatenation product gives rise to the $\RR$-algebras $\Polynomials{\RR}{\Letters}$ of noncommutative polynomials on $\Letters$, and $\Series{\RR}{\Letters}$ of non-commutative power series on $\Letters$.  
We denote by $\ShuffleProduct:\Series{\RR}{\Letters}\otimes \Series{\RR}{\Letters} \to \Series{\RR}{\Letters}$ the standard bilinear shuffle product  (see section \ref{sec:Preliminaries} for the definition), and denote by $\SeriesShuffle{\RR}{\Letters}$ the commutative $\RR$-algebra of power series on $\Letters$ with the product $\ShuffleProduct$ (we use this notation only in case the shuffle algebra structure is crucial).
We define the shuffle exponential $\exp_{\ShuffleProduct} : \Series{\RR}{\Letters}_0 \to \Series{\RR}{\Letters}$ by
$$
\ExpShuffle{P} := \EmptyWord + \sum_{k=1}^\infty \frac{P^{\ShuffleProduct k}}{k!},
$$
where the "0" subscript in $\Series{\RR}{\Letters}_0$ means that we take $P$ with zero constant term. 

For fixed measurable controls $u_a, u_b, u_c:[0,t]\to\RR$, we define the linear homomorphism $\Upsilon^t:\SeriesShuffle{\RR}{\Letters}\to \RR$ given by
\begin{align}
\label{eq:Upsilon}
\WordsOfLength{}\ni w = a_1\cdots a_n \mapsto \Upsilon^t(w) := \int_0^{t}u_{a_n}(t_n) \int_0^{t_n}\cdots \int_0^{t_2}u_{a_1}(t_1) \, dt_1\ldots dt_{n-1} dt_n.
\end{align}
It is easy to check that $\Upsilon^t$ is in fact an associative algebra homomorphism, i.e. $\Upsilon^t(v\ShuffleProduct w) = \Upsilon^t(v)\Upsilon^t(w)$ (see \cite{Chen68Algebraic,Reutenauer93FreeLie,Kawski02Combinatorics}).

Finally denote by $\VectorFlow{t X}:\Manifold\to\Manifold$ the flow of a tangent vector field $X\in\TangentFields{\Manifold}$ at time $t \geq 0$.

\begin{theorem}
\label{thm:Main}
Let $X_a, X_b, X_c \in \TangentFields{\Manifold}$ be smooth tangent vector fields satisfying \eqref{eq:a1LieBrackets}. Then (locally)
the solution $x:[0,T]\to\Manifold$ of the differential equation \eqref{eq:DifferentialEquation} is of the form
\begin{align}
\label{eq:Solution}
x(t) = \VectorFlow{\Xi_c(t) X_c} \VectorFlow{\Xi_b(t) X_b} \VectorFlow{\Xi_a(t) X_a} (x_0).
\end{align}
Here, $\Xi_a, \Xi_b, \Xi_c : [0,T] \to \RR$ are given by $\Xi_d(t) := \Upsilon^t(\WordsInSeriesOfLength{d}{})$ (for $d = a, b, c$), where 
\begin{align*}
 \WordsInSeriesOfLength{a}{} &= a\, \ExpShuffle{2\AOneSeries}, &
\WordsInSeriesOfLength{b}{} &= {\AOneSeries}, &
\WordsInSeriesOfLength{c}{} &= \ExpShuffle{2\AOneSeries}\, c, 
\end{align*}
and $\AOneSeries\in\Series{\RR}{\Letters}$  
is the unique solution of the algebraic equation
\begin{align*}
\AOneSeries =  b - a\, \ExpShuffle{2\AOneSeries}\, c.
\end{align*}
In particular, we have
\begin{align*}
b - a\WordsInSeriesOfLength{c}{} = \WordsInSeriesOfLength{b}{} = b - \WordsInSeriesOfLength{a}{} c.
\end{align*}
\end{theorem}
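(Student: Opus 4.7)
The plan is to follow the general Kawski--Sussmann framework of formal power-series solutions, then specialize it using the bespoke Hall basis adapted to $\LieAlgebra{a}_1$ that is the object of Section~4.

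First I would lift the problem to the free setting. The formal Chen series
$$C_t \;:=\; \sum_{w\in\WordsOfLength{}} \Upsilon^t(w)\, w \;\in\; \Series{\RR}{\Letters}$$
is shuffle group-like by the standard Chen--Fliess--Reutenauer result, so the PBW-type factorization associated to any Hall basis $H$ of $\Lie{\Letters}$ yields the decreasing product expansion
$$C_t \;=\; \overleftarrow\prod_{h \in H} \exp\!\bigl(\Upsilon^t(\xi_h)\, h\bigr),$$
where $\xi_h \in \SeriesShuffle{\RR}{\Letters}$ denotes the shuffle-dual of $h$. The theorem of Kawski and Sussmann then asserts that, under the Lie algebra homomorphism $\pi:\Lie{\Letters} \to \LieAlgebra{a}_1$ sending $a,b,c \mapsto X_a, X_b, X_c$ and interpreting each exponential as the flow of the corresponding vector field acting on $x_0$, one recovers the solution of \eqref{eq:DifferentialEquation}.

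Next I would invoke the Hall basis built in Section~4. Its defining feature will be that $\pi(h) = 0$ for every Hall element $h$ outside of a distinguished triple $\{h_a, h_b, h_c\}$ with $\pi(h_d) = X_d$; since $\dim\LieAlgebra{a}_1 = 3$, such a choice is possible in principle, and the content of Section~4 is to carry it out concretely, compatibly with the Hall order $c > b > a$. Once that is in hand, the infinite decreasing product collapses under $\pi$ to
$$x(t) = \VectorFlow{\Xi_c(t) X_c}\,\VectorFlow{\Xi_b(t) X_b}\,\VectorFlow{\Xi_a(t) X_a}(x_0),$$
with $\Xi_d(t) = \Upsilon^t(S_d)$ and $S_d := \xi_{h_d}$.

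The main work---and the step I expect to be most delicate---is the explicit computation of the shuffle duals $S_a, S_b, S_c$. Using the recursive description of Hall duals together with the relations \eqref{eq:a1LieBrackets}, one sees that the $\pm 2$-eigenvalue action of $\mathrm{ad}\,X_b$ on $X_a$ and $X_c$ produces the factor $\ExpShuffle{2 S_b}$ via repeated shuffle multiplication, while the bracket $[X_a, X_c] = -X_b$ forces the dual of $h_b$ to be assembled as an $a$-on-the-left/$c$-on-the-right sandwich. This yields the formulas
$$S_a = a\,\ExpShuffle{2 S_b},\qquad S_c = \ExpShuffle{2 S_b}\, c,\qquad S_b = b - a\,\ExpShuffle{2 S_b}\, c,$$
the last being a self-consistency equation. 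Existence and uniqueness of $S = S_b$ follow from the graded structure of the equation, since the length-$n$ coefficient of its right-hand side depends only on coefficients of strictly shorter words; the identities $S_b = b - a S_c = b - S_a c$ are then immediate substitutions.
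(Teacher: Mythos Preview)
Your overall framework is right, but there is a genuine misconception in the second step. It is \emph{not} possible to build a Hall basis of $\Lie{\Letters}$ in which only three elements survive the projection $\pi$ to $\LieAlgebra{a}_1$: any Hall set contains all length-two brackets, and $[a,b]$, $[a,c]$, $[b,c]$ already project to $2X_a$, $-X_b$, $2X_c$, none of which vanish. In fact infinitely many Hall elements have nonzero image. What the adapted Hall set of Section~4 actually achieves is a \emph{partition} $\HallSetAOne{} = \HallSetOfLength{o}{}\cup\HallSetOfLength{a}{}\cup\HallSetOfLength{b}{}\cup\HallSetOfLength{c}{}$ with an ordering in which every element of $\HallSetOfLength{o}{}$ is below every element of $\HallSetOfLength{a}{}$, which is below every element of $\HallSetOfLength{b}{}$, which is below every element of $\HallSetOfLength{c}{}$; and $\pi(h)=\gamma_d(h)X_d$ for $h\in\HallSetOfLength{d}{}$ ($d=a,b,c$), while $\pi(h)=0$ for $h\in\HallSetOfLength{o}{}$. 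The decreasing product then collapses not because only three factors are nontrivial, but because within each block all flows are along the \emph{same} vector field and hence commute, yielding $\Xi_d(t)=\Upsilon^t\bigl(\sum_{h\in\HallSetOfLength{d}{}}\gamma_d(h)\,\xi_h\bigr)$. So $S_d$ is an infinite weighted sum of shuffle duals, not a single $\xi_{h_d}$.

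This changes the nature of the ``main work'' substantially: you must compute the coefficients $\gamma_d(h)$ (Lemma~\ref{lem:FunctionGamma}) and then sum the resulting series. The paper handles $S_b$ and $S_a$ by direct combinatorics on the structure of $\HallSetOfLength{b}{}$ and $\HallSetOfLength{a}{}$ (the latter yielding the shuffle exponential via Proposition~\ref{prop:ExpShuffle}), but for $S_c$ it takes a different route: it differentiates the already-established form \eqref{eq:Solution} with respect to $t$, compares with \eqref{eq:DifferentialEquation}, and reads off $\dot{\tilde\Xi}_c=u_c\,e^{2\Xi_b}$, from which $S_c=\ExpShuffle{2S}\,c$ follows. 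Your sketch of how the $\pm 2$ eigenvalues produce the shuffle exponential is morally on target, but the mechanism runs through these infinite sums rather than through three individual duals.
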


\begin{remark}
The solution in the form (\ref{eq:Solution}) exists locally by virtue of the Wei and Norman theorem \cite{WeiNorman1964OnGlobal}. It is not our goal to investigate the issue of the global solution in this paper (in the mentioned article Wei and Norman give example of a local but non-global solution in $\LieAlgebra{so}(3)$ case), rather, we focus on the formal one. 
\end{remark}

\begin{remark}
Since $\Upsilon^t:\SeriesShuffle{\RR}{\Letters}\to\RR$ is an algebra homomorphism, it is easy to check that
\begin{align*}
\Xi_c(t) = \int_0^t u_c(t_1)\cdot e^{2\Upsilon^{t_1}(\AOneSeries)}\, dt_1. 
\end{align*}

\end{remark}


As a byproduct of the proof of the above theorem we get that $\Xi_a$ is the solution of a Riccati equation. We state it in the following theorem.

\begin{theorem}
\label{thm:Riccati}
For fixed measurable functions $u_a, u_b, u_c:[0,T]\to\RR$ the function $\Xi_a:[0,T]\to\RR$, defined in Theorem \ref{thm:Main} by $\Xi_a(t) = \Upsilon^t(a\, \ExpShuffle{2\AOneSeries})$, is (locally) the solution of the Riccati equation:
\begin{align*}
\dot\Xi_a(t) &= u_a(t) + 2u_b(t)\cdot \Xi_a(t) - u_c(t)\cdot \Xi_a^2(t) \\
\Xi_a(0) &= 0.
\end{align*}
\end{theorem}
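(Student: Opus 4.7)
First, I would use the standard relationship between the shuffle-algebra homomorphism $\Upsilon^t$ and right-truncation on words. Letting $P\leftarrow d$ denote the linear map that keeps only the words ending in $d$ and deletes that final letter, the defining formula \eqref{eq:Upsilon} gives immediately
\[
\frac{d}{dt}\Upsilon^t(P) \;=\; \sum_{d\in\Letters} u_d(t)\,\Upsilon^t(P\leftarrow d).
\]
The initial condition $\Xi_a(0)=0$ is automatic since $\WordsInSeriesOfLength{a}{}$ has no constant term. So the whole question reduces to computing $\WordsInSeriesOfLength{a}{}\leftarrow d$ for $d\in\Letters$ in a form that $\Upsilon^t$ can evaluate, and recognizing the result as $u_a + 2u_b\Xi_a - u_c\Xi_a^2$.

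Next, I would exploit the key algebraic fact that $\leftarrow d$ is a \emph{derivation} of the shuffle algebra; this follows from the right-recursive definition of $\ShuffleProduct$ (the last letter of any term in $u\ShuffleProduct v$ comes either from $u$ or from $v$):
\[
(P\ShuffleProduct Q)\leftarrow d \;=\; (P\leftarrow d)\ShuffleProduct Q \;+\; P\ShuffleProduct(Q\leftarrow d).
\]
Applied term-by-term to the shuffle exponential $T:=\ExpShuffle{2\AOneSeries}$ this yields $T\leftarrow d = (2\AOneSeries\leftarrow d)\ShuffleProduct T$. Together with the trivial rule $(Q\cdot c)\leftarrow d = \delta_{d,c}\,Q$ for concatenation with a single letter, the algebraic equation $\AOneSeries = b - \WordsInSeriesOfLength{a}{}\,c$ (where the juxtaposition is \emph{concatenation}) gives $\AOneSeries\leftarrow a = 0$, $\AOneSeries\leftarrow b = \EmptyWord$, and $\AOneSeries\leftarrow c = -\WordsInSeriesOfLength{a}{}$. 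Combining with $(aT)\leftarrow d = \delta_{d,a}\EmptyWord + a\,(T\leftarrow d)$, I obtain
\begin{align*}
\WordsInSeriesOfLength{a}{}\leftarrow a &= \EmptyWord, &
\WordsInSeriesOfLength{a}{}\leftarrow b &= 2\,\WordsInSeriesOfLength{a}{}, &
\WordsInSeriesOfLength{a}{}\leftarrow c &= -2\,a\,(\WordsInSeriesOfLength{a}{}\ShuffleProduct T).
\end{align*}

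The last identity to verify---and really the only nontrivial one---is $2\,a\,(\WordsInSeriesOfLength{a}{}\ShuffleProduct T) = \WordsInSeriesOfLength{a}{}\ShuffleProduct\WordsInSeriesOfLength{a}{}$. This will follow from the \emph{left} shuffle recursion $(aU)\ShuffleProduct(aV) = a(U\ShuffleProduct aV) + a(aU\ShuffleProduct V)$ specialized at $U=V=T$. Applying $\Upsilon^t$ (a shuffle homomorphism) then converts it to $2\Upsilon^t\bigl(a(\WordsInSeriesOfLength{a}{}\ShuffleProduct T)\bigr) = \Xi_a(t)^2$, and inserting the three truncations into the derivative formula produces exactly $\dot\Xi_a(t) = u_a(t) + 2u_b(t)\,\Xi_a(t) - u_c(t)\,\Xi_a(t)^2$. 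The main thing to watch throughout is the bookkeeping between concatenation and shuffle products in the definitions of $\WordsInSeriesOfLength{a}{}$, $\AOneSeries$, and $T$; once these are tracked carefully, the whole argument is really just a handful of lines.
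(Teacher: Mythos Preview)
Your argument is correct, and it is genuinely different from the paper's. The paper does not compute $\WordsInSeriesOfLength{a}{}\leftarrow d$ at all; instead it first establishes (via the Kawski--Sussmann product expansion over the adapted Hall set $\HallSetAOne{}$) that the solution of \eqref{eq:DifferentialEquation} takes the Wei--Norman form $x(t)=\VectorFlow{\tilde\Xi_c X_c}\VectorFlow{\Xi_b X_b}\VectorFlow{\Xi_a X_a}(x_0)$, then differentiates this composition of flows using the bracket relations \eqref{eq:a1LieBrackets} and matches the result against \eqref{eq:DifferentialEquation}. The Riccati equation for $\Xi_a$ drops out of that comparison of coefficients, together with companion equations $\dot\Xi_b=u_b-u_c\Xi_a$ and $\dot{\tilde\Xi}_c=u_c e^{2\Xi_b}$ that simultaneously pin down $\WordsInSeriesOfLength{b}{}$ and $\WordsInSeriesOfLength{c}{}$.

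Your route is more elementary and entirely intrinsic to the shuffle algebra: once one accepts the defining relation $\AOneSeries=b-\WordsInSeriesOfLength{a}{}\,c$ (which is immediate from the statement of Theorem~\ref{thm:Main}), everything follows from the derivation property of right-truncation and the left shuffle recursion, with no appeal to Hall sets, the product formula, or the manifold $\Manifold$. The paper's approach, by contrast, buys more in one stroke---it derives the three coupled scalar ODEs and the identities \eqref{eq:Additional} together, tying the Riccati equation to the geometric Wei--Norman picture---at the cost of invoking the full Kawski--Sussmann machinery.
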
 

As is well known, a specific solution of a Riccati equation gives rise to all its other solutions (the appropriate formula can be found for example in \cite{Carinena08Geoemetric}). An easy calculation gives the following corollary.

\begin{corollary}
For fixed measurable functions $u_a, u_b, u_c:[0,T]\to\RR$, the function $y : [0,T]\to\RR$, where
\begin{align*}
y(t) = \Upsilon^t (a\, \ExpShuffle{2\AOneSeries})  + \frac{y_0\cdot e^{2\Upsilon^t(\AOneSeries)}}{1 + y_0\cdot \int_0^t u_c(t_1)\cdot e^{2\Upsilon^{t_1}(\AOneSeries)}\, dt_1},
\end{align*}
is (locally) the solution of the Riccati equation:
\begin{align*}
\dot y(t) &= u_a(t) + 2u_b(t)\cdot y(t) - u_c(t)\cdot y^2(t) \\
y(0) &= y_0.
\end{align*}
\end{corollary}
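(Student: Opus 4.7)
The plan is to apply the classical Riccati substitution based on the particular solution supplied by Theorem~\ref{thm:Riccati}. Setting $y_p(t) := \Upsilon^t(a\, \ExpShuffle{2\AOneSeries}) = \Xi_a(t)$ and writing $y = y_p + 1/w$, a routine differentiation and cancellation (using $\dot y_p = u_a + 2 u_b y_p - u_c y_p^2$) reduces the Riccati equation for $y$ to the first-order linear equation
\[
\dot w(t) = \bigl(2 u_c(t)\, \Xi_a(t) - 2 u_b(t)\bigr)\, w(t) + u_c(t), \qquad w(0) = 1/y_0,
\]
where the initial condition is forced by $y(0) = y_p(0) + 1/w(0) = y_0$.

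To solve this linear ODE explicitly I first compute $\dot\Xi_b$. Applying $\Upsilon^t$ to the defining algebraic identity $\AOneSeries = b - a\, \ExpShuffle{2\AOneSeries}\, c$: the word $b$ contributes $u_b(t)$, while for every word of the form $awc$ (with $w\in\WordsOfLength{}$) one has $\tfrac{d}{dt}\Upsilon^t(awc) = u_c(t)\, \Upsilon^t(aw)$ by peeling off the outermost integral in \eqref{eq:Upsilon}. Summing over the shuffle exponential gives
\[
\dot\Xi_b(t) = u_b(t) - u_c(t)\, \Xi_a(t),
\]
so $\int_0^t \bigl(2 u_c(s)\Xi_a(s) - 2 u_b(s)\bigr) ds = -2\Xi_b(t) = -2\Upsilon^t(\AOneSeries)$. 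Consequently the integrating factor for the linear ODE above is $e^{2\Upsilon^t(\AOneSeries)}$.

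Multiplying by this factor and integrating from $0$ to $t$, using the Remark after Theorem~\ref{thm:Main} in the form $\dot\Xi_c(t) = u_c(t)\, e^{2\Upsilon^t(\AOneSeries)}$, I obtain
\[
w(t) = e^{-2\Upsilon^t(\AOneSeries)} \left( \frac{1}{y_0} + \int_0^t u_c(t_1)\, e^{2\Upsilon^{t_1}(\AOneSeries)}\, dt_1 \right).
\]
Inverting this expression and adding $y_p(t) = \Upsilon^t(a\, \ExpShuffle{2\AOneSeries})$ yields exactly the formula in the corollary, and the initial condition is automatic. The only non-routine step is the identity $\dot\Xi_b = u_b - u_c \Xi_a$; the remainder is textbook Riccati manipulation.
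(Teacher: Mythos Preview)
Your argument is correct and is precisely the ``easy calculation'' the paper alludes to but does not spell out: the paper simply cites the classical fact that one particular Riccati solution yields all others and leaves the formula as an exercise, whereas you carry out the standard substitution $y = \Xi_a + 1/w$ explicitly. One minor remark: the identity $\dot\Xi_b = u_b - u_c\,\Xi_a$ that you derive by differentiating $\Upsilon^t$ applied to $\AOneSeries = b - a\,\ExpShuffle{2\AOneSeries}\,c$ is already established in the paper (equation~\eqref{eq:TheTwo}), though there it is obtained by differentiating the full flow decomposition~\eqref{eq:Solution} and comparing coefficients; your direct word-by-word differentiation is a slightly cleaner route to the same fact.
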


The series $\AOneSeries\in\Series{\RR}{\Letters}$ can be computed explicitly. In order to do this, we recursively define two series of ordered sets of words $
\HallWordsOfLength{b}{n}, \HallWordsOfLength{c}{n} \subset \WordsOfLength{n}$ ($n=1,2,\ldots$) as follows (in each set we denote by $\leq$ the according total ordering):
\begin{enumerate}[(a)]
\item $\HallWordsOfLength{b}{1}:=\Set{b}$ and for $n \geq 2$, $\HallWordsOfLength{b}{n} = a\HallWordsOfLength{c}{n-1} := \SetSuchThat{av}{v\in\HallWordsOfLength{c}{n-1}}$ with ordering taken from $\HallWordsOfLength{c}{n-1}$ that is $av < aw$ iff $v < w$.
\item $\HallWordsOfLength{c}{1} := \Set{c}$ and for $n \geq 2$, 
\begin{enumerate}[(i)]
\item $\HallWordsOfLength{c}{n} := \bigcup_{i=1}^k \HallWordsOfLength{b}{i}\HallWordsOfLength{c}{n-i} := \bigcup_{i=1}^k \SetSuchThat{vw}{v\in\HallWordsOfLength{b}{i}, w\in\HallWordsOfLength{c}{n-i}}$ if $n=2k$ is even, with "the lexicographical ordering" that is $vw < v' w'$ with $v, v'\in\HallWordsOfLength{b}{i}$ and $w, w'\in\HallWordsOfLength{c}{n-i}$ iff $v < v'$ or ($v = v'$ and $w < w'$);
\item $\HallWordsOfLength{c}{n} := \bigcup_{i=1}^k \HallWordsOfLength{b}{i}\HallWordsOfLength{c}{n-i} \cup \OddSetC{k}$ if $n=2k+1$ is odd, where
$$
\OddSetC{k} := \SetSuchThat{avw}{v, w\in\HallWordsOfLength{c}{k},\ v \geq w }.
$$
The ordering is like in the case (i) with additional relations that $v < w $ for all $v\in\OddSetC{k}$, $w\in \bigcup_{i=1}^k \HallWordsOfLength{b}{i}\HallWordsOfLength{c}{n-i}$,  and $avw < av' w'$ with $v,v', w,w' \in\HallWordsOfLength{c}{k}$ iff $v < v'$ or ($v = v'$ and $w < w'$).
\end{enumerate}
\end{enumerate} 
For $d = 
b, c$, define the set $\HallWordsOfLength{d}{} := \bigcup_{n=1}^\infty \HallWordsOfLength{d}{n}$. In each of these sets we introduce the ordering originating from the orderings $\leq$ on $\HallWordsOfLength{d}{n}$ with the additional relation that for $v\in\HallWordsOfLength{d}{p}, w\in\HallWordsOfLength{d}{q}$ with $p > q$ we put $v\leq w$.

For all words in $\HallWordsOfLength{}{} :=  
\HallWordsOfLength{b}{} \cup \HallWordsOfLength{c}{} \subset \WordsOfLength{}$, we recursively define a mapping $\FunctionXi \cdot:\HallWordsOfLength{}{}\to\Polynomials{\RR}{\Letters}$ by 
\begin{enumerate}[(a)]
\item $\FunctionXi b := b$ and \\ $\FunctionXi {av} := a\FunctionXi v$ for $av\in \HallWordsOfLength{a}{n}$, where  $v\in\HallWordsOfLength{c}{n-1}$.
\item $\FunctionXi c := c$, \\ $\FunctionXi {bw} := b\FunctionXi v$ for $bw\in \HallWordsOfLength{c}{n}$, where  $w\in\HallWordsOfLength{c}{n-1}$, and \\ $\FunctionXi {avw} = a\FunctionXi v\ShuffleWeightedProduct\FunctionXi w$ for $avw\in \HallWordsOfLength{c}{n}$, where  $v,w\in\HallWordsOfLength{c}{n-1}$.
\end{enumerate}
The product $\ShuffleWeightedProduct$ is a slight modification of $\ShuffleProduct$, in particular, for $v_1 = \cdots = v_{i_1} > \cdots > w_1 = \cdots = w_{i_k}$, where $i_1,\ldots,i_k\in\NN$, we define
\begin{align*}
\FunctionXi {v_1}\ShuffleWeightedProduct\cdots\ShuffleWeightedProduct\FunctionXi {w_{i_k}} := 
\frac 1 {i_1!\cdots i_k !}\ \FunctionXi {v_1}^{\ShuffleProduct i_1} \ShuffleProduct\cdots\ShuffleProduct\FunctionXi {w_1}^{\ShuffleProduct i_k},
\end{align*}
where ${P}^{\ShuffleProduct 1} := P$ and ${P}^{\ShuffleProduct i} :=  {P}^{\ShuffleProduct i-1}\ShuffleProduct P$, $i=2,3,\ldots$, for all $P\in\Series{\RR}{\Letters}$.

Note that in case (b), the definition is enough, since for $vw\in\HallWordsOfLength{c}{}$, where $v\in\HallWordsOfLength{b}{} and w\in\HallWordsOfLength{c}{} $, we have $v=av'$ with $v'\in\HallWordsOfLength{b}{}$. 

Finally, we define the main series $\MainSeries\in\Series{\RR}{\Letters}$ by
\begin{align}
\label{eq:MainSeries}
\MainSeries := \sum_{w\in\HallWordsOfLength{b}{}} \FunctionGamma {b} (w)\, \FunctionXi{w},
\end{align}
where $\FunctionGamma {b}(b) := 1$ and 
\begin{align}
\label{eq:FunctionGammaB}
\FunctionGamma {b}(aw') := - (-2)^{|w'|_a}\cdot 2^{|w'|_b}
\end{align}
for $aw'\in\HallWordsOfLength{b}{}$.

\begin{proposition}
\label{prop:MainSeries}
The series $\MainSeries\in\Series{\RR}{\Letters}$ defined in \eqref{eq:MainSeries}, is the unique solution of the algebraic equation
\begin{align*}
\MainSeries =  b - a\, \ExpShuffle{2\MainSeries}\, c.
\end{align*}
In other words $\AOneSeries = \MainSeries$.
\end{proposition}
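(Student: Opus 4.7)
\textbf{Uniqueness.} A candidate solution $S=\sum_{n\geq 1}S_n$ in $\Series{\RR}{\Letters}$, decomposed into homogeneous components, satisfies $S_1=b$ and, for $n\geq 2$, the recursion
\[
S_n \;=\; -\sum_{k=0}^{n-2}\frac{2^k}{k!}\bigl[a\,S^{\ShuffleProduct k}\,c\bigr]_n.
\]
Since $S^{\ShuffleProduct k}$ has valuation at least $k$, the right-hand side depends only on $S_1,\dots,S_{n-2}$, so the recursion is well-defined and uniqueness follows.

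\textbf{Existence.} Using the splitting $\HallWordsOfLength{b}{}=\{b\}\sqcup a\HallWordsOfLength{c}{}$, the rule $\FunctionXi{av'}=a\FunctionXi{v'}$ from case~(a), and the explicit formula for $\FunctionGamma{b}$, introduce the auxiliary series
\[
R \;:=\; \sum_{v\in\HallWordsOfLength{c}{}}(-2)^{|v|_a}\,2^{|v|_b}\,\FunctionXi{v},
\]
so that $\MainSeries = b - aR$. The algebraic equation $\MainSeries=b-a\ExpShuffle{2\MainSeries}c$ then becomes equivalent to $R=\ExpShuffle{2\MainSeries}\cdot c$. I propose to establish this in two stages. \emph{Stage 1}: verify that $R$ satisfies the closed recursion
\[
R \;=\; c + 2bR - aR^{\ShuffleProduct 2}, \qquad(\ast)
\]
by decomposing the sum defining $R$ according to the tree structure of Hall words in $\HallWordsOfLength{c}{}$: the leaf $c$ contributes $c$; the unary $b$-nodes (case (b)(i) with $i=1$) contribute $2bR$ via $\FunctionXi{bw}=b\FunctionXi{w}$; and the binary $a$-nodes (cases with $i\geq 2$, together with the odd set $\OddSetC{k}$) contribute $-aR^{\ShuffleProduct 2}$. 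For the last contribution the key facts are that the Hall admissibility condition ``$avw\in\HallWordsOfLength{c}{}$'' is equivalent to $v\geq w$ in $\HallWordsOfLength{c}{}$, that $\FunctionXi{avw}=a(\FunctionXi{v}\ShuffleWeightedProduct\FunctionXi{w})$, and that the factor $\tfrac{1}{i_1!\cdots i_m!}$ absorbed into $\ShuffleWeightedProduct$ on diagonal pairs $v=w$ converts the weakly-decreasing sum into $\tfrac{1}{2}R^{\ShuffleProduct 2}$.

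\emph{Stage 2}: show that $(\ast)$, together with $R_1=c$, forces $R=\ExpShuffle{2(b-aR)}c$. Both equations determine $R$ uniquely by degree recursion (the argument being analogous to the uniqueness step above), so it suffices to check that, writing $T=\ExpShuffle{2\MainSeries}\cdot c$ and substituting $\MainSeries=b-aT$ in the algebraic equation, the resulting degree-recursion for $T$ coincides with $(\ast)$; this follows by unwinding the shuffle-exponential expansion degree by degree.

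\textbf{Main obstacle.} The delicate step is Stage~1, the verification of $(\ast)$ for the Hall-defined series $R$. The recursive construction of $\HallWordsOfLength{c}{n}$ distinguishes even and odd word-lengths and uses the auxiliary set $\OddSetC{k}$ precisely to ensure the clean Hall admissibility criterion $v\geq w$; carrying out this identification in detail, and correctly accounting for the weighted-shuffle multiplicities on diagonal pairs $v=w$, is where most of the combinatorial work lies. Once $(\ast)$ is in hand, Stage~2 and the uniqueness argument combine to finish the proof.
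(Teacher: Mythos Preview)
Your uniqueness argument is the same as the paper's. For existence, however, your route and the paper's diverge. The paper never proves $(\ast)$. Instead it obtains $S^a=a\,\ExpShuffle{2\MainSeries}$ directly from the Hall combinatorics of $\HallSetOfLength{a}{}$ (Lemma~\ref{lem:HallElemA} and Proposition~\ref{prop:ExpShuffle}), and then derives the missing identity $\MainSeries=b-S^a c$ \emph{analytically}, by differentiating the flow expression \eqref{eq:SolutionPrim} and matching coefficients against \eqref{eq:DifferentialEquation}. Your plan --- extract a quadratic recursion for $R=\sum_{v\in\HallWordsOfLength{c}{}}(-2)^{|v|_a}2^{|v|_b}\FunctionXi v$ from the tree structure of $\HallWordsOfLength{c}{}$, then upgrade it to the exponential equation --- is purely algebraic and self-contained, which is a genuine advantage. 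Your Stage~1 is correct: the three node types of Corollary~\ref{cor:HallSetC} contribute exactly $c$, $2bR$, and $-aR^{\ShuffleProduct 2}$, the diagonal factor $\tfrac12$ in $\ShuffleWeightedProduct$ doing precisely the symmetrisation you describe.

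Stage~2, however, has a real gap. The two implicit equations
\[
R=c+2bR-aR^{\ShuffleProduct 2}
\qquad\text{and}\qquad
R=\ExpShuffle{2(b-aR)}\,c
\]
do \emph{not} yield the same degree recursion: in $(\ast)$ the component $R_n$ depends on $R_{n-1}$, whereas in the exponential equation $R_n$ depends only on $R_1,\dots,R_{n-2}$. That each equation has a unique solution says nothing, by itself, about whether those solutions agree; ``unwinding degree by degree'' cannot produce a coincidence of recursions that does not exist. What you actually need is a comparison argument. One clean fix: set $S=b-aR$, $E=\ExpShuffle{2S}$, and use that the left-letter removals $D_d$ are shuffle-derivations to get $D_bE=2E$, $D_aE=-2R\ShuffleProduct E$, $D_cE=0$, hence
\[
E=1+2bE-2a(R\ShuffleProduct E),\qquad Ec=c+2b(Ec)-2a(R\ShuffleProduct E)c.
\]
Subtracting $(\ast)$ and writing $U=R-Ec$, the identity $(Ec)^{\ShuffleProduct 2}=2(E\ShuffleProduct Ec)c$ gives
\[
U=2bU+a\bigl(2(U\ShuffleProduct E)c-2(Ec)\ShuffleProduct U-U^{\ShuffleProduct 2}\bigr),
\]
and every term on the right has valuation strictly larger than $\operatorname{val}(U)$, forcing $U=0$. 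With this repair your argument goes through and gives an alternative, ODE-free proof of the proposition.
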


The expansion of the main series up to words of length less than $6$, is 
\begin{align*}
\MainSeries = b - ac - 2abc - 4abbc + 2aacc - 8abbbc + 4a(acb + 2 abc + bac)c + \ldots
\end{align*}
This expansion suggests a certain symmetry of the series $\MainSeries$. Namely, define an algebra antihomomorphism $\Antipode{(\cdot)}:\Series{\RR}{\Letters}\to\Series{\RR}{\Letters}$, $\Antipode{(a_1\cdots a_n)} = a_n\cdots a_1$ for $a_1\cdots a_n \in \WordsOfLength{}{}$, and an algebra homomorphism $\FlipHom:\Series{\RR}{\Letters}\to\Series{\RR}{\Letters}$, such that $\FlipHom(a) = c$, $\FlipHom(b) = b$, and $\FlipHom(c) = a$. Denote by $\Antipode\FlipHom:\Series{\RR}{\Letters}\to\Series{\RR}{\Letters}$ the composition of $\Antipode{(\cdot)}$ with $\FlipHom$.

\begin{proposition}
\label{prop:MainSeriesSymmetry}
Let $\WordsInSeriesOfLength{a}{}, \WordsInSeriesOfLength{b}{}, \WordsInSeriesOfLength{c}{}\in \Series{\RR}{\Letters}$ be the series defined in Theorem \ref{thm:Main}. Then 
\begin{align*}
\Antipode\FlipHom (\WordsInSeriesOfLength{b}{}) &= \WordsInSeriesOfLength{b}{},& 
\Antipode\FlipHom (\WordsInSeriesOfLength{a}{}) &= \WordsInSeriesOfLength{c}{},&
\Antipode\FlipHom (\WordsInSeriesOfLength{c}{}) &= \WordsInSeriesOfLength{a}{}.
\end{align*}
\end{proposition}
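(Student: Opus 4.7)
The plan is to apply the operator $\Antipode\FlipHom$ to the defining equation $\AOneSeries = b - a\,\ExpShuffle{2\AOneSeries}\,c$ and to observe that $\Antipode\FlipHom(\AOneSeries)$ again solves this very equation; uniqueness of the solution will then force $\Antipode\FlipHom(\AOneSeries)=\AOneSeries$, which is exactly the identity $\Antipode\FlipHom(\WordsInSeriesOfLength{b}{}) = \WordsInSeriesOfLength{b}{}$. The remaining two identities will follow in one line from the explicit descriptions $\WordsInSeriesOfLength{a}{} = a\,\ExpShuffle{2\AOneSeries}$ and $\WordsInSeriesOfLength{c}{} = \ExpShuffle{2\AOneSeries}\,c$ together with the anti-homomorphism property of $\Antipode\FlipHom$ for the concatenation product.

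The main preliminary I would record is the dual nature of $\Antipode\FlipHom$. As a composition of the concatenation anti-homomorphism $\Antipode{(\cdot)}$ with the concatenation homomorphism $\FlipHom$, the map $\Antipode\FlipHom$ is itself an anti-homomorphism for the concatenation product, with $\Antipode\FlipHom(a)=c$, $\Antipode\FlipHom(b)=b$, $\Antipode\FlipHom(c)=a$. On the other hand, both $\Antipode{(\cdot)}$ and $\FlipHom$ are automorphisms of the commutative algebra $\SeriesShuffle{\RR}{\Letters}$: for $\FlipHom$ this is immediate since it only relabels letters, while for $\Antipode{(\cdot)}$ it is the standard identity $\Antipode{(v\ShuffleProduct w)} = \Antipode{v}\ShuffleProduct \Antipode{w}$ coming from commutativity of the shuffle (cf.\ \cite{Reutenauer93FreeLie}). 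Consequently $\Antipode\FlipHom$ commutes with shuffle exponentiation, i.e.\ $\Antipode\FlipHom(\ExpShuffle{P}) = \ExpShuffle{\Antipode\FlipHom(P)}$ for every $P\in\Series{\RR}{\Letters}_0$.

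Granting these two properties, the computation becomes a one-liner. Setting $T := \Antipode\FlipHom(\AOneSeries)$ and applying $\Antipode\FlipHom$ to the defining equation reverses the three concatenation factors $a$, $\ExpShuffle{2\AOneSeries}$, $c$ and turns $\ExpShuffle{2\AOneSeries}$ into $\ExpShuffle{2T}$, producing $T = b - a\,\ExpShuffle{2T}\,c$. Uniqueness of the solution of this algebraic equation in $\Series{\RR}{\Letters}$ --- implicit in the coefficient-by-coefficient construction of $\MainSeries$ carried out in Proposition \ref{prop:MainSeries} --- forces $T=\AOneSeries$. Applying $\Antipode\FlipHom$ directly to $a\,\ExpShuffle{2\AOneSeries}$ and to $\ExpShuffle{2\AOneSeries}\,c$ then immediately yields $\Antipode\FlipHom(\WordsInSeriesOfLength{a}{}) = \WordsInSeriesOfLength{c}{}$ and $\Antipode\FlipHom(\WordsInSeriesOfLength{c}{}) = \WordsInSeriesOfLength{a}{}$.

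The only step that is not entirely mechanical is the preliminary lemma that reversal is a shuffle automorphism; this is the single place where a little care is required, but it is a classical fact about the shuffle Hopf algebra, so the whole proposition reduces essentially to the uniqueness of $\AOneSeries$ established earlier.
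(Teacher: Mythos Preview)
Your proposal is correct and follows essentially the same route as the paper's proof: apply $\Antipode\FlipHom$ to the defining equation, observe that $\Antipode\FlipHom(\AOneSeries)$ satisfies the same fixed-point equation, and invoke the uniqueness from Proposition \ref{prop:MainSeries}; the other two identities then drop out of the explicit formulas for $\WordsInSeriesOfLength{a}{}$ and $\WordsInSeriesOfLength{c}{}$. Your write-up is slightly more explicit than the paper's about the preliminary fact that reversal is a shuffle homomorphism (the paper just alludes to commutativity of $\ShuffleProduct$), but the argument is the same.
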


Finally, we compute an expression for $\ExpShuffle{2\MainSeries}$.

\begin{proposition}
\label{prop:ExpShuffle}
For the main series $\MainSeries\in\Series{\RR}{\Letters}$, it follows that
\begin{align*}
\ExpShuffle{2\MainSeries} = 
1 + \sum_{i=1}^\infty 2^i \sum_{\substack{w_1\geq\cdots\geq w_i \\ w_i\in\HallWordsOfLength{b}{}}} \FunctionGamma b(w_1) \cdots \FunctionGamma b(w_i)\ \FunctionXi {w_1}\ShuffleWeightedProduct\cdots\ShuffleWeightedProduct \FunctionXi {w_i}.
\end{align*}
\end{proposition}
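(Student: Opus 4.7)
The plan is to unfold the definition of the shuffle exponential and then reorganize the resulting iterated sum according to the multiset of Hall words it involves. First, I expand
\begin{align*}
\ExpShuffle{2\MainSeries} = \EmptyWord + \sum_{k=1}^\infty \frac{(2\MainSeries)^{\ShuffleProduct k}}{k!},
\end{align*}
and substitute \eqref{eq:MainSeries} into each factor, producing the $k$-fold sum
\begin{align*}
(2\MainSeries)^{\ShuffleProduct k} = 2^k \sum_{w_1,\ldots,w_k \in \HallWordsOfLength{b}{}} \FunctionGamma{b}(w_1)\cdots\FunctionGamma{b}(w_k)\,\FunctionXi{w_1}\ShuffleProduct\cdots\ShuffleProduct\FunctionXi{w_k}.
\end{align*}

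Next, since $\ShuffleProduct$ is commutative and the scalar weight $\FunctionGamma{b}(w_1)\cdots\FunctionGamma{b}(w_k)$ is symmetric under permutations of its arguments, I would reindex the sum by the unique weakly decreasing rearrangement of each tuple. For a weakly decreasing tuple $w_1\geq\cdots\geq w_k$ whose blocks of equal entries have sizes $i_1,\ldots,i_m$ (with $i_1+\cdots+i_m=k$), there are exactly $k!/(i_1!\cdots i_m!)$ ordered tuples that rearrange to it, and every ordered tuple is counted exactly once this way.

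The final step is to match this multinomial count against the normalization built into $\ShuffleWeightedProduct$: by definition,
\begin{align*}
\FunctionXi{w_1}\ShuffleProduct\cdots\ShuffleProduct\FunctionXi{w_k} = i_1!\cdots i_m!\cdot \FunctionXi{w_1}\ShuffleWeightedProduct\cdots\ShuffleWeightedProduct\FunctionXi{w_k}
\end{align*}
on a decreasing tuple. Thus the factor $1/k!$ from the shuffle exponential, the multinomial count $k!/(i_1!\cdots i_m!)$ from grouping, and the normalization $i_1!\cdots i_m!$ hidden in $\ShuffleWeightedProduct$ cancel to $1$, leaving exactly the $i=k$-th summand in the statement.

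The whole argument is combinatorial bookkeeping with no analytic content, so I anticipate no real obstacle. The only delicate point is verifying that the $1/(i_1!\cdots i_m!)$ built into the very definition of $\ShuffleWeightedProduct$ is precisely the normalization that cancels the multinomial coefficient from grouping tuples by their decreasing rearrangement; this is of course exactly what motivates the definition of $\ShuffleWeightedProduct$ in the first place, and once it is written out explicitly the identity drops out immediately.
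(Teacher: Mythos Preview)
Your proposal is correct and follows essentially the same route as the paper: expand the shuffle exponential, substitute the definition \eqref{eq:MainSeries} of $\MainSeries$, use commutativity of $\ShuffleProduct$ to reindex over weakly decreasing tuples, and observe that the multinomial count $k!/(i_1!\cdots i_m!)$ cancels against the $1/k!$ and the $i_1!\cdots i_m!$ built into $\ShuffleWeightedProduct$. The paper carries this out as an explicit chain of equalities rather than a verbal argument, but the content is identical.
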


\section{Preliminary results}
\label{sec:Preliminaries}

The proofs of Theorems \ref{thm:Main} and \ref{thm:Riccati}, are based on the results of Kawski and Sussmann \cite[section 5]{Kawski97NoncommutativePower} which generalize Sussmann's work \cite{Sussman86ProductExpansion}. Actually, a similar solution stated in a strictly algebraic fashion, can be found in  \cite[Corollary 5.6]{Reutenauer93FreeLie} (the algebraic analog of the earlier Sussmann's result was proved in \cite{Melancon89LyndonWords}).
In order to state these results, we introduce necessary notions from the theory of free Lie algebras (we follow \cite{Reutenauer93FreeLie} where the proofs and details can be found).

Assume $\LettersGeneral$ is a certain set which we call the alphabet. As in section \ref{sec:Results}, we denote by $\WordsOfLengthGeneral{n}$, $\WordsOfLengthGeneral{}$, $\Polynomials{\RR}{\LettersGeneral}$, and $\Series{\RR}{\Letters}$, the set of words of length $n$, the set of all words,  the $\RR$-algebras of non-commutative polynomials, and series in the letters $\LettersGeneral$, respectively. 
Since $\Polynomials{\RR}{\LettersGeneral}\subset\Series{\RR}{\LettersGeneral}$, and $\Series{\RR}{\LettersGeneral}$ is the algebraic closure of $\Polynomials{\RR}{\LettersGeneral}$, we define all objects in the larger algebra.
In particular, the product of two series $P=\sum_{w\in\WordsOfLength{}}\ScalarProduct{P}{w}{} w$ and $Q=\sum_{w\in\WordsOfLength{}}\ScalarProduct{Q}{w}{} w$ is defined by $PQ:=\sum_{w\in\WordsOfLength{}}\ScalarProduct{PQ}{w}{} w$ with $\ScalarProduct{PQ}{w}{} := \sum_{uv=w} \ScalarProduct{P}{u}{}\ScalarProduct{Q}{v}{}$.
In $\Series{\RR}{\LettersGeneral}$ we define the standard Lie bracket $[\cdot,\cdot]:\Series{\RR}{\LettersGeneral} \otimes\Series{\RR}{\LettersGeneral} \to\Series{\RR}{\LettersGeneral}$ given by $[P,Q] := PQ - QP$ for $P,Q\in\Series{\RR}{\LettersGeneral}$. 
We denote by $\Lie{\LettersGeneral}$ the smallest $\RR$-submodule of $\Series{\RR}{\LettersGeneral}$ which contains $\LettersGeneral$, and is closed under the Lie bracket. As is well known, $\Lie{\LettersGeneral}$ is the algebraic closure of the free Lie algebra generated by $\LettersGeneral$.

On $\Series{\RR}{\LettersGeneral}$ we also consider the bilinear shuffle product $\ShuffleProduct:\Series{\RR}{\LettersGeneral} \otimes\Series{\RR}{\LettersGeneral}\to \Series{\RR}{\LettersGeneral}$ defined recursively for words by putting $\EmptyWord\ShuffleProduct w = w\ShuffleProduct \EmptyWord = w$ for any $w\in\WordsOfLengthGeneral{}$, and 
\begin{align}
\label{eq:ShuffleDef}
(w_1a_1)\ShuffleProduct(w_2a_2) = (w_1\ShuffleProduct(w_2a_2))a_1 +  ((w_1a_1)\ShuffleProduct w_2)a_2
\end{align}
for all $a_1,a_2\in\LettersGeneral$ and $w_1, w_2\in\WordsOfLengthGeneral{}$. 
It easy to check, that \eqref{eq:ShuffleDef} is equivalent to
\begin{align}
\label{eq:ShuffleDefPrim}
(a_1w_1)\ShuffleProduct(a_2w_2) = a_1(w_1\ShuffleProduct(a_2w_2)) +  a_2((a_1w_1)\ShuffleProduct w_2)
\end{align}
for all $a_1,a_2\in\LettersGeneral$ and $w_1, w_2\in\WordsOfLengthGeneral{}$.

Let $\FreeMagma{\LettersGeneral}$ be the set of binary, complete, planar, rooted trees with leaves labelled by $\LettersGeneral$. Each such tree can be naturally identified with the unique expression in the set $\GoodExpressions{\LettersGeneral}$ defined by the following two conditions: (i) if $a\in\LettersGeneral$, then $a\in\GoodExpressions{\LettersGeneral}$, and (ii) if $t,t'\in\GoodExpressions{\LettersGeneral}$, then $(t,t')\in\GoodExpressions{\LettersGeneral}$. In the sequel we will not distinguish between these sets, i.e. we assume $\FreeMagma{\LettersGeneral} = \GoodExpressions{\LettersGeneral}$. Define the mapping
$\Foliage{\cdot}:\FreeMagma{\LettersGeneral}\to\WordsOfLengthGeneral{}$, which assigns to a tree $t\in\FreeMagma{\LettersGeneral}$ the word given by dropping all brackets in it, i.e., $\Foliage{a} = a$ for all $a\in\LettersGeneral$, and $\Foliage{(t,t')} = \Foliage{t}\Foliage{t'}$ for all $t,t'\in\FreeMagma{\LettersGeneral}$. The word $\Foliage{t}$ is called the foliage of $t\in\FreeMagma{\LettersGeneral}$.
Define also the mapping $\HallPol{\cdot}:\FreeMagma{\LettersGeneral}\to \Lie{\LettersGeneral}$, which changes the rounded brackets into the Lie brackets, i.e.,
$\HallPol{a} = a$ for all $a\in\LettersGeneral$, and $\HallPol{(t,t')} := [\HallPol{t},\HallPol{t'}]$ for all $t,t'\in\FreeMagma{\LettersGeneral}$.
We will generalize this definition in the sequel. 
A Hall set $\HallSetOfLengthGeneral{}{}$ on the letters $\LettersGeneral$ (which should also be called a Shirshov set \cite{Shirshov62Bases} and a Viennot set \cite{Viennot78Algebres}), is a subset of $\FreeMagma{\LettersGeneral}$ totally ordered by $\leq$ and satisfying:
\begin{enumerate}[(I)]
\item \label{enum:H1} $\LettersGeneral \subset \HallSetOfLengthGeneral{}{}$;
\item \label{enum:H2} if $h = (h',h'')\in\HallSetOfLengthGeneral{}{}\setminus\LettersGeneral$, then $h'' \in \HallSetOfLengthGeneral{}{}$ and $h < h''$;
\item \label{enum:H3} for all $h = (h',h'')\in\FreeMagma{\LettersGeneral}\setminus\LettersGeneral$ we have $h\in\HallSetOfLengthGeneral{}{}$ iff 
\begin{itemize}
\item $h', h'' \in \HallSetOfLengthGeneral{}{}$ and $h' < h''$, and
\item $h' \in \LettersGeneral$ or $h' = (x,y)$ such that $y \geq h''$. 
\end{itemize}
\end{enumerate}

Fix a Hall set $\HallSetOfLengthGeneral{}{}$ on the letters $\LettersGeneral$ totally ordered by $\leq$. Each Hall tree $h\in\HallSetOfLengthGeneral{}{}$ corresponds to a word $\Foliage{h}\in\WordsOfLengthGeneral{}$ called a Hall word. Denote by $\HallWordsOfLengthGeneral{}{}$, the set of Hall words with ordering $\leq$ inherited from the ordering on $\HallSetOfLengthGeneral{}{}$ in the natural way.
It is a nontrivial fact that each word $w\in\WordsOfLengthGeneral{}$, is the unique concatenation of a unique non-increasing series of Hall words, that is, $w=h_1\cdots h_k$ for some unique $k\in\NN$, and $h_i\in\HallWordsOfLengthGeneral{}{}$ such that $h_1 \geq\cdots\geq h_k$  (in the sequel we will use letter '$h$' to describe both Hall words and Hall trees).
Let $\HallPol{\cdot} : \WordsOfLengthGeneral{}\to\Polynomials{\RR}{\LettersGeneral}$ be the mapping defined by
\begin{enumerate}[(i)]
\item $\HallPol{\EmptyWord} := 1$;
\item $\HallPol{a} := a$ for $a\in\LettersGeneral$;
\item $\HallPol{h} := \HallPol{t}\in\Lie{\LettersGeneral}$ for $h\in\HallWordsOfLengthGeneral{}{}$ such that $h=\Foliage{t}$, $t\in\HallSetOfLengthGeneral{}{}\subset\FreeMagma{\LettersGeneral}$;
\item $\HallPol{w} := \HallPol{h_1}\cdots\HallPol{h_k}\in\Polynomials{\RR}{\LettersGeneral}$ for $w = h_1\cdots h_k$, where $k\in\NN$ and $h_i\in\HallWordsOfLengthGeneral{}{}$ such that $h_1 \geq\cdots\geq h_k$.
\end{enumerate} 
The set $\HallBasisOfLengthGeneral{}{} := \SetSuchThat{\HallPol{h}\in\Lie{\LettersGeneral}} {h\in\HallWordsOfLengthGeneral{}{}}$ is the Hall basis of $\Lie{\LettersGeneral}$  corresponding to the Hall set $\HallSetOfLengthGeneral{}{}$. 
By the Poincar{\' e}-Birkhoff-Witt theorem, the set of ordered products $\HallPol{h_1}\cdots \HallPol{h_k}$, where $h_1 \geq \cdots \geq h_k$ are Hall words, creates a basis for the enveloping algebra of $\Lie{\LettersGeneral}$, which in the free case is isomorphic to $\Series{\RR}{\LettersGeneral}$. Therefore $\SetSuchThat{\HallPol{w}}{w\in\WordsOfLengthGeneral{}}$ is a basis in $\Series{\RR}{\LettersGeneral}$ (but in fact each $\HallPol{w}\in\Polynomials{\RR}{\LettersGeneral}$, so it is also a basis in $\Polynomials{\RR}{\LettersGeneral}$). For our purpose it is crucial to consider the dual basis $\SetSuchThat{\HallDualPol{w}}{w\in\WordsOfLengthGeneral{}}$ of the algebra $\Series{\RR}{\LettersGeneral}$, defined as always by 
\begin{align*}
v = \sum_{w\in\WordsOfLengthGeneral{}}\ \ScalarProduct{\HallDualPol{w}}{v}{} \HallPol{w}
\end{align*} 
for any $v\in\WordsOfLengthGeneral{}$. In this context we have the following proposition.

\begin{proposition}[{\cite[Theorem 5.3]{Reutenauer93FreeLie}}]
\label{prop:HallDualPol}
\begin{enumerate}[(i)]
\item $\HallDualPol{1} = 1$;
\item If $h = av \in \HallWordsOfLengthGeneral{}{}$ is a Hall word, where $a\in \LettersGeneral, v\in\WordsOfLengthGeneral{}$, then $\HallDualPol{h} = a\HallDualPol{v}$; 
\item If $w = h_1^{i_1}\cdots h_k^{i_k} \in \WordsOfLengthGeneral{}$ is any word, where $h_1 > \cdots > h_k$ are Hall words and $i_1,\ldots,i_k\in\NN$, then 
\begin{align*}
\HallDualPol{w} = 
\frac 1 {i_1!\cdots i_k !}\ \HallDualPol{h_1}^{\ShuffleProduct i_1} \ShuffleProduct\cdots\ShuffleProduct\HallDualPol{h_k}^{\ShuffleProduct i_k}
\end{align*}
\end{enumerate}
(recall that ${P}^{\ShuffleProduct 1} = P$ and ${P}^{\ShuffleProduct i} =  {P}^{\ShuffleProduct i-1}\ShuffleProduct P$, $i=2,3,\ldots$, for all $P\in\Polynomials{\RR}{\LettersGeneral}$).
\end{proposition}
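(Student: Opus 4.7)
My plan is to establish the three parts of the proposition using the standard duality between the two Hopf algebra structures on $\Polynomials{\RR}{\LettersGeneral}$: the concatenation algebra endowed with the primitive coproduct $\Delta(a) = a\otimes 1 + 1\otimes a$ (extended multiplicatively), versus its graded dual, which is the shuffle algebra with deconcatenation coproduct, paired via $\langle v, w\rangle = \delta_{v,w}$ on the word basis $\WordsOfLengthGeneral{}$. Under this pairing, $\Lie{\LettersGeneral}$ is precisely the space of primitives for $\Delta$, so every $\HallPol{h}$ with $h \in \HallWordsOfLengthGeneral{}{}$ is primitive.

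Part (i) is immediate. For (iii), since $\Delta$ is an algebra morphism for concatenation and the two summands $\HallPol{h}\otimes 1$ and $1\otimes \HallPol{h}$ commute in $\Polynomials{\RR}{\LettersGeneral}\otimes \Polynomials{\RR}{\LettersGeneral}$, binomial expansion gives
\begin{equation*}
\Delta\bigl(\HallPol{h_1}^{i_1}\cdots \HallPol{h_k}^{i_k}\bigr) = \sum_{0\leq j_l\leq i_l}\prod_{l=1}^{k}\binom{i_l}{j_l}\, \HallPol{h_1}^{j_1}\cdots \HallPol{h_k}^{j_k} \otimes \HallPol{h_1}^{i_1-j_1}\cdots \HallPol{h_k}^{i_k-j_k}.
\end{equation*}
Transposing these multinomial matrix entries through the duality pairing yields the shuffle-product identity $\HallDualPol{h_1}^{\ShuffleProduct i_1}\ShuffleProduct\cdots\ShuffleProduct \HallDualPol{h_k}^{\ShuffleProduct i_k} = i_1!\cdots i_k!\,\HallDualPol{h_1^{i_1}\cdots h_k^{i_k}}$, which is (iii).

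Part (ii) is the combinatorial core, and I would prove it by induction on $|h|$. The base case $h = a \in \LettersGeneral$ gives $\HallDualPol{a} = a = a\cdot \HallDualPol{\EmptyWord}$, since $\HallPol{a}=a$. For the inductive step with $h = av$, I would work on the dual side: it suffices to verify $\langle a\HallDualPol{v}, \HallPol{w}\rangle = \delta_{h,w}$ for every word $w$. Since each $\HallPol{g}$ in a Hall factorization has no constant term, the left-deconcatenation operator $\partial_a^L$ (strip a leading $a$) satisfies $\partial_a^L(\HallPol{g_1}\cdots \HallPol{g_m}) = (\partial_a^L \HallPol{g_1})\HallPol{g_2}\cdots \HallPol{g_m}$, so the identity reduces to a coefficient calculation for $\partial_a^L \HallPol{g_1}$ in the PBW basis. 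The main obstacle is showing that this coefficient picks out $\HallPol{v}$ exactly when $g_1 = h$ and vanishes otherwise; this ultimately relies on the unitriangularity of the transition matrix between the word basis and the Hall basis, together with Hall axiom (III), which constrains the shape of the left subtree of any Hall tree and thereby pins down how prepending a letter interacts with the Hall factorization.
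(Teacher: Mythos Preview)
The paper does not prove this proposition at all: it is quoted as \cite[Theorem 5.3]{Reutenauer93FreeLie} and used as a black-box preliminary fact, so there is no ``paper's own proof'' to compare your attempt against.

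On the merits of your sketch independently: the Hopf-duality framework is exactly the right setting, and your argument for (iii) via primitivity of the $\HallPol{h}$ and binomial expansion of $\Delta$ is the standard one and is essentially complete. Part (ii), however, is where all the work lies, and your outline stops short of an argument. You correctly identify that one must show $\langle a\HallDualPol{v}, \HallPol{w}\rangle = \delta_{h,w}$, and that this reduces to understanding $\partial_a^L \HallPol{g_1}$ in the PBW basis; but the sentence ``this ultimately relies on the unitriangularity \ldots\ together with Hall axiom (III)'' is a description of the obstacle, not a resolution of it. In Reutenauer's proof the corresponding step is handled by the combinatorics of the \emph{standard bracketing} of words and the rise/descent structure of Hall factorizations (his Theorems 5.1--5.3), which requires several pages. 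Invoking unitriangularity alone is not enough: one needs the precise triangularity statement that $\HallPol{h} = h + (\text{words strictly smaller in a suitable order})$ \emph{and} that stripping the leading letter respects Hall factorization in the required way, which is exactly Hall axiom (III) put to work via an explicit rewriting argument. As written, (ii) is a plan with the key lemma left unproved.
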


In what follows we will sometimes use a natural notation $\HallDualPol{h} := \HallDualPol{\Foliage h}$ for $h\in\HallSetOfLength{}{}$.

In \cite{Kawski97NoncommutativePower}, Kawski and Sussmann consider the so called universal control system evolving in the algebra $\Series{\RR}{\LettersGeneral}$, and the solution is known as the Chen-Fliess series. Their main result states that  the Chen-Fliess series is equal to the infinite product of group-like elements in $\Series{\RR}{\LettersGeneral}$ (i.e., elements $\exp(S)$ where $S\in\Lie{\LettersGeneral}$), parametrized by any Hall set on the letters ${\LettersGeneral}$. For our purposes it is important that such a universal control system can be utilized in examining a differential affine control system:
\begin{align}
\label{eq:GeneralControlSystem}
\begin{split}
\dot y(t) &= \sum_{a\in \LettersGeneral} u_a(t) Y_a,\\
y(0) &= y_0\in \Manifold,
\end{split}
\end{align}
where $\Manifold$ is an arbitrary finite dimensional smooth manifold, $u_a:[0,T]\to\RR$ are measurable controls, and $Y_i\in\TangentFields{\Manifold}$ are fixed smooth tangent vector fields.
We state the Kawski-Sussmann theorem for this control system. Namely, if we take any Hall set $\HallSetOfLengthGeneral{}{}$, then for fixed $u_i$'s (locally) the solution to \eqref{eq:GeneralControlSystem} is
\begin{align}
\label{eq:GeneralSolution}
y(t) = \DecreasingProduct_{h\in\HallSetOfLengthGeneral{}{}} \VectorFlow{\IterIntegral{h}{t}\cdot Y_h}(y_0).
\end{align}
The symbol $\DecreasingProduct_{h\in\HallSetOfLengthGeneral{}{}}$ denotes the decreasing product with respect to the ordering $\leq$ in $\HallSetOfLengthGeneral{}{}$, i.e. 
$$ 
\DecreasingProduct_{h\in\HallSetOfLengthGeneral{}{}}F_h := \cdots F_h F_{h'} \cdots,
$$ 
where $ \cdots > h > h' > \cdots$.
The tangent vector fields $Y_h$, are defined by the relation of the Hall set with the Hall basis in $\Lie{\LettersGeneral}$, and the universal property of the free Lie algebra $\Lie{\LettersGeneral}$, that is $Y_. : \HallSetOfLengthGeneral{}{}\to \TangentFields{\Manifold}$ is the composition of the mapping $\HallSetOfLengthGeneral{}{} \ni h \mapsto \HallPol{h}\in\Lie{\LettersGeneral}$ with the unique Lie-algebra homomorphism generated by $\Lie{\LettersGeneral} \supset \LettersGeneral \ni a \mapsto Y_a \in \TangentFields{\Manifold}$.

In order to define $\IterIntegral{h}{t}$, we introduce the Zinbiel algebra $\ChronAlgIterInt{\LettersGeneral}{t}$ of iterated integrals of controls $u_a(t), a\in\LettersGeneral$, that is the $\RR$-algebra generated by $U_a (t) := \int_0^t u_a(t_1)\, dt_1$ with Zinbiel product $\ChronProduct : \ChronAlgIterInt{\LettersGeneral}{t}\otimes \ChronAlgIterInt{\LettersGeneral}{t} \to \ChronAlgIterInt{\LettersGeneral}{t}$ given by 
$$
(f_1\ChronProduct f_2)(t) :=  \int_0^t f_1(t_1) f_2'(t_1)\, dt_1.
$$  
The algebra is Zinbiel since $f_1\ChronProduct (f_2\ChronProduct f_3) = (f_1\ChronProduct f_2)\ChronProduct f_3 + (f_2\ChronProduct f_1)\ChronProduct f_3$ for all $f_1, f_2, f_3\in\ChronAlgIterInt{\LettersGeneral}{t}$, which is easy to check. In particular, 
$$
((\cdots(U_{a_1}\ChronProduct U_{a_2})\ChronProduct\cdots)\ChronProduct U_{a_n})(t) = 
\int_0^{t}u_{a_n}(t_n) \int_0^{t_n}\cdots \int_0^{t_2}u_{a_1}(t_1) \, dt_1\ldots dt_{n-1} dt_n.
$$
Therefore, we define the linear mapping $\Upsilon^t_{\int}:\Series{\RR}{\LettersGeneral}\to \ChronAlgIterInt{\LettersGeneral}{t}$ by putting $\Upsilon^t_{\int}(\EmptyWord) = 1$ and
\begin{align}
\label{eq:UpsilonDef}
\Upsilon^t_{\int}(a_1\cdots a_n) &:= \Upsilon^t_{\int}(a_1\cdots a_{n-1})\ChronProduct U_{a_n}
\\
\nonumber
&=
((\cdots(U_{a_1}\ChronProduct U_{a_2})\ChronProduct\cdots)\ChronProduct U_{a_n})(t)
\end{align}
for any word $a_1\cdots a_n\in\WordsOfLengthGeneral{}$.
For fixed integrable controls $u_a$, we define $\Upsilon^t:\SeriesShuffle{\RR}{\LettersGeneral}\to\RR$, $S \mapsto \Upsilon^t(S)$ as the evaluation of $\Upsilon^t_{\int}(S) \in \ChronAlgIterInt{\LettersGeneral}{t}$ on these controls. This definition coincides with \eqref{eq:Upsilon}. Since $\HallDualPol{h}$ are expressed in terms of the shuffle product, it is important to note  that \cite{Chen68Algebraic,Reutenauer93FreeLie,Kawski02Combinatorics}
\begin{align}
\label{eq:UpsilonIsAHomom}
\Upsilon^t(v\ShuffleProduct w) = \Upsilon^t(v)\cdot\Upsilon^t(w),
\end{align}
where $\cdot$ is the ordinary multiplication in $\RR$.

Now the remaining definition in \eqref{eq:GeneralSolution} is 
\begin{align}
\label{eqn:IterIntegral}
\IterIntegral{h}{t} := \Upsilon^t (\HallDualPol{\Foliage{h}}).
\end{align}

\section{$\LieAlgebra{a}_1$-type Hall set}
\label{sec:HallSet}

In this section we begin to prove the theorems stated in section \ref{sec:Results}. As we pointed out in the previous section, we are going to use formula \eqref{eq:GeneralSolution} of the formal solution of \eqref{eq:GeneralControlSystem}. 
In order to do that, we construct a specific Hall set $\HallSetAOne{}$ on the set of letters $\LettersGeneral = \Letters = \Set{a,b,c}$ ordered by $\leq$, adapted to Lie algebra of the type $\LieAlgebra{a}_1$. Let $\Ideal{\LieAlgebra{a}_1}$ be the smallest Lie ideal of $\Lie{\Letters}$ generated by elements of the form $[a,b] - 2a,  [a, c] + b, [b, c] - 2 c$. Then $\AOne := \Lie{\Letters} / \Ideal{\LieAlgebra{a}_1}$ is a Lie algebra of the type $\LieAlgebra{a}_1$. The Hall set $\HallSetAOne{}$ is going to satisfy the following conditions:
\begin{enumerate}[(I)]
\item \label{enum:HS1} $\HallSetAOne{} = \HallSetOfLength{o}{}\cup\HallSetOfLength{a}{}\cup \HallSetOfLength{b}{}\cup\HallSetOfLength{c}{}$; if $h\in\HallSetOfLength{d}{}$ ($d=a,b,c$), then 
\begin{align}
\label{eq:HallPolGamma}
\HallPol{h} = \gamma_d(h)\cdot d \Mod \Ideal{\aOne}
\end{align}
with certain functions $\gamma_d:\HallSetOfLength{d}{}\to\ZZ$, and if $h\in\HallSetOfLength{o}{}$, then $\HallPol{h} = 0 \Mod \Ideal{\aOne}$;
\item \label{enum:HS2} $\HallSetOfLength{d}{} = \bigcup_{n\in\NN} \HallSetOfLength{d}{n}$ for $d=o,a,b,c$, and $h\in\HallSetOfLength{d}{n}$ iff $h\in\HallSetOfLength{d}{}$ and $|h| = n$;
\item \label{enum:HS3} each set $\HallSetOfLength{d}{n}$ is finite.
\end{enumerate}
Assuming we define $\HallSetOfLength{d}{n}$ with total orderings $\leq$ of the type (cardinality of  $\HallSetOfLength{d}{n}$) $\in\NN$, we define the total ordering $\leq$ in $\HallSetAOne{}$ of the type $\omega\cdot 4$ (i.e., the ordering isomorphic to the lexicographical ordering on $\Set{c,b,a,o}\times\NN$) by adding the following intuitive relations:
\begin{enumerate}[(A)]
\item \label{enum:Order1} for $h_1\in\HallSetOfLength{d}{n_1}, h_2\in\HallSetOfLength{d}{n_2}$ with $n_1\neq n_2$ we have $h_1 < h_2$ iff $n_1 > n_2$;
\item \label{enum:Order2} for $h_o\in\HallSetOfLength{o}{}, h_a\in\HallSetOfLength{a}{}, h_b\in\HallSetOfLength{b}{}, h_c\in\HallSetOfLength{c}{}$ we have $h_o < h_a < h_b < h_c$.
\end{enumerate}

Let us comment on these assumptions. The reason for distinguishing such a Hall set is that although we consider the free Lie algebra, to write out the solution of the differential equation, we actually assume certain relations on the vector fields involved. That is why we define the ideal $\Ideal{\aOne}$, and the quotient algebra $\AOne$. Namely, if we extend the mapping $\Letters \to \TangentFields{\Manifold}$, $d\mapsto X_d$ for $d= a,b,c$, to the Lie algebra homomorphism $ X_\cdot: \Lie{\Letters}\to\TangentFields{\Manifold}$, then $\Ideal{\aOne}$ is in the kernel of $X_\cdot$, so we can also consider the quotient algebra homomorphism $\FreeToTangent_\cdot:\AOne\to\TangentFields{\Manifold}$. The point is that, by \eqref{enum:HS1}, $\FreeToTangent_h = \gamma_d(h) X_d$ for each $h\in\HallSetOfLength{d}{}$ if $d=a,b,c$, and $\FreeToTangent_h = 0$ for each $h\in\HallSetOfLength{o}{}$. In particular, we do not need to care about elements or the ordering of $\HallSetOfLength{o}{}$.

Let us construct the sets $\HallSetOfLength{d}{n}$.
By the definition, each Hall set contains all letters, so if \eqref{enum:HS1} and \eqref{enum:HS2} (for n=1) are to be satisfied, we define $\HallSetOfLength{d}{1} := \Set{d}$ for $d =a, b, c$ and $\HallSetOfLength{o}{1} := \emptyset$. By \eqref{enum:Order2} we have $a < b < c$. In order to satisfy the definition of a Hall set for $n=2$, we must allocate  $(a,b), (a,c), (b,c)$. To fulfill \eqref{enum:HS1} we put 
\begin{align*}
\HallSetOfLength{a}{2} &:= \Set{(a,b)},& \HallSetOfLength{b}{2} &:= \Set{(a,c)},& \HallSetOfLength{c}{2} &:= \Set{(b,c)}, & \HallSetOfLength{o}{2} &:= \emptyset.
\end{align*} 
By \eqref{enum:Order2}, the ordering is $(a,b) < a < (a,c) < b < (b,c) < c$.
We pass to $n=3$. We need to add $((a,b),a), ((a,b),b), (a,(a,c)), (a,(b,c)), ((a,c),b), ((a,c),c), (b,(b,c)), ((b,c),c)$. Now it is easy to compute Lie polynomials connected with these elements modulo $\Ideal{\aOne}$, and to satisfy \eqref{enum:HS1} we must put
\begin{align*}
\HallSetOfLength{a}{3} &:= \Set{(a,(a,c)),((a,b),b)},& \HallSetOfLength{b}{3} &:= \Set{(a(bc))},\\ \HallSetOfLength{c}{3} &:= \Set{(b,(b,c)),((a,c),c)}, & \HallSetOfLength{o}{3} &:= \Set{((a,b),a), ((a,c),b), ((b,c),c)}.
\end{align*} 
We order these sets so that $(a,(a,c)) > ((a,b),b)$, $(b,(b,c)) > ((a,c),c)$ and $\HallSetOfLength{o}{3}$ in any arbitrary way. 
For $n>3$ the number of Hall elements grow very fast, but observe that in fact we are interested only in specific Hall elements. Namely, in $\HallSetOfLength{a}{n}$ there will be only (but not all) elements of the form $(v,w)$, where $v\in\HallSetOfLength{a}{k}, w\in\HallSetOfLength{b}{l}$, $k,l < n$. This comes from the relations $[a,b] = 2a, [a,c] = - b, [b,c] = 2c$ satisfied in $\AOne$ together with the assumptions \eqref{enum:H2} (in the definition of a Hall set) and \eqref{enum:Order1} (the assumption on the ordering). Similarly, if $(v,w)\in\HallSetOfLength{b}{n}$, then $v\in\HallSetOfLength{a}{k}, w\in\HallSetOfLength{c}{l}$, $k,l < n$, and if $(v,w)\in\HallSetOfLength{c}{n}$, then $v\in\HallSetOfLength{b}{k}, w\in\HallSetOfLength{c}{l}$, $k, l < n$. Adding the assumption \eqref{enum:H3} of a Hall set, we reach the following lemma.

\begin{lemma}
\label{lem:HallSet}
Assume that $\HallSetOfLength{a}{1} := \Set{a}, \HallSetOfLength{b}{1} := \Set{b}, \HallSetOfLength{c}{1} := \Set{c}, \HallSetOfLength{o}{1} := \emptyset$. For $n > 1$ we recursively define ordered sets $\HallSetOfLength{d}{n}\subset\FreeMagma{\Letters}$ by
\begin{enumerate}[(a)]
\item $\HallSetOfLength{a}{n} := \SetSuchThat{(\cdots(a,v_1)\cdots ,v_i)\in\FreeMagma{\Letters}} {v_i\in\HallSetOfLength{b}{},\ v_1 \geq\cdots\geq v_i,\ |v_1|+\ldots+ |v_i| = n-1}$ with any total ordering.
\item $\HallSetOfLength{b}{n} := \SetSuchThat{(a,v)\in\FreeMagma{\Letters}}{v\in\HallSetOfLength{c}{n-1}}$ with ordering taken from $\HallSetOfLength{c}{n-1}$ that is $(a,v) < (a,w)$ iff $v < w$;
\item 
\begin{enumerate}[(i)]
\item in case $n = 2k$ is even, 
$$
\HallSetOfLength{c}{n} := \bigcup_{i=1}^k \SetSuchThat{(v,w)\in\FreeMagma{\Letters}} {v\in\HallSetOfLength{b}{i}, w\in \HallSetOfLength{c}{n-i}},
$$ 
with "the lexicographical ordering" that is $(v,w) < (v', w')$ with $v, v'\in\HallSetOfLength{b}{i}$ and $w, w'\in\HallSetOfLength{c}{n-i}$ iff $v < v'$ or ($v = v'$ and $w < w'$);
\item in case $n = 2k+1$ is odd, 
\begin{align*}
\HallSetOfLength{c}{n} := \bigcup_{i=1}^k \SetSuchThat{(v,w)\in\FreeMagma{\Letters}} {v\in\HallSetOfLength{b}{i}, w\in \HallSetOfLength{c}{n-i}} \cup \OddHallSetC{k},
\end{align*} 
where $\OddHallSetC{k} := \SetSuchThat{((a,v),w)\in\FreeMagma\Letters} {v,w \in\HallSetOfLength{c}{k},\ v \geq w }.$ The ordering is like in case (i) with additional relations that $v < w $ for all $v\in\OddHallSetC{k}$, $w\in \HallSetOfLength{c}{n}\setminus\OddHallSetC{k}$,  and $((a,v),w) < ((a,v'), w')$ with $v,v', w,w' \in\HallSetOfLength{c}{k}$ iff $v < v'$ or ($v = v'$ and $w < w'$).
\end{enumerate}
\item $\HallSetOfLength{o}{n} := \HallSetOfLength{aa}{n}\cup \HallSetOfLength{oa}{n}\cup \HallSetOfLength{bb}{n}\cup \HallSetOfLength{ob}{n}\cup \HallSetOfLength{cc}{n} \cup \HallSetOfLength{oc}{n}\cup \HallSetOfLength{oo}{n}$ with any fixed ordering, where
\begin{align*}
\HallSetOfLength{d d'}{n} := \bigcup_{i=1}^{n-1}\SetSuchThat{(h',h'')\in \FreeMagma{\Letters}}{h'\in\HallSetOfLength{d}{i}, h''\in\HallSetOfLength{d'}{n-i}, \text{s.t. (\ref{enum:H2}) and (\ref{enum:H3}) are satisfied} }.
\end{align*}
\end{enumerate} 
Then $\HallSetAOne{}$ defined by \eqref{enum:HS1}, \eqref{enum:HS2} and \eqref{enum:HS3} with the ordering $\leq$ given by \eqref{enum:Order1} and \eqref{enum:Order2} is a Hall set.
\end{lemma}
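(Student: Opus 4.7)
The plan is to prove the lemma by strong induction on $n = |h|$, simultaneously checking (1) that the recursively defined sets $\HallSetOfLength{d}{n}$ exhaust the subset of $\FreeMagma{\Letters}$ forced by the Hall-set axioms \eqref{enum:H1}--\eqref{enum:H3}, and (2) that the structural congruence \eqref{eq:HallPolGamma} propagates through the recursion. The base case $n=1$ is immediate from the definitions, and the cases $n=2,3$ were spelled out before the lemma statement.

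For the inductive step, fix $h=(h',h'')\in\FreeMagma{\Letters}$ of length $n\geq 2$ satisfying the standardness clause of \eqref{enum:H3}, and classify it by the type-pair $(d',d'')\in\Set{o,a,b,c}^2$ of $(h',h'')$ supplied by induction. Using the $\aOne$-relations $[a,b]\equiv 2a$, $[a,c]\equiv -b$, $[b,c]\equiv 2c$ (mod $\Ideal{\aOne}$) together with $[d,d]=0$, one reads off at once
\begin{align*}
(a,b)\ \rightsquigarrow\ \HallSetOfLength{a}{n},\qquad
(a,c)\ \rightsquigarrow\ \HallSetOfLength{b}{n},\qquad
(b,c)\ \rightsquigarrow\ \HallSetOfLength{c}{n},
\end{align*}
while every remaining admissible pair (a diagonal $(d,d)$ with $d\in\Set{a,b,c}$, or any pair involving $o$) yields $\HallPol{h}\equiv 0\pmod{\Ideal{\aOne}}$ and lands in $\HallSetOfLength{o}{n}$. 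The precise recursive shapes (a)--(c) then drop out of the standardness clause ``$h'\in\Letters$ or $h'=(x,y)$ with $y\geq h''$'' combined with the induction hypothesis: for example in the $(a,c)$ case, a non-letter $h'$ would have, by induction, the form $(\cdots(a,v_1)\cdots,v_{i-1})$ with $v_{i-1}\in\HallSetOfLength{b}{}$, but then $v_{i-1}\geq h''$ is impossible since \eqref{enum:Order2} forces $v_{i-1}\in\HallSetOfLength{b}{}<h''\in\HallSetOfLength{c}{}$, leaving only $h=(a,v)$ with $v\in\HallSetOfLength{c}{n-1}$ and reproducing (b). The coefficients $\gamma_d(h)$ are read off inductively along the way.

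The main subtle point---and the only step that requires real bookkeeping---is the case $(b,c)\rightsquigarrow\HallSetOfLength{c}{n}$, which is what produces the additional family $\OddHallSetC{k}$ in the odd length $n=2k+1$. A non-letter $h'\in\HallSetOfLength{b}{i}$ is by induction of the form $(a,u)$ with $u\in\HallSetOfLength{c}{i-1}$, so the standardness requirement $u\geq h''$ becomes a length comparison inside $\HallSetOfLength{c}{}$: by \eqref{enum:Order1} this holds iff $i-1\leq n-i$, and the equality case $i-1=n-i$ admits an integer solution precisely when $n$ is odd (giving $i=k+1$). The strict-inequality cases $i\leq k$ are absorbed into $\bigcup_{i=1}^{k}\HallSetOfLength{b}{i}\HallSetOfLength{c}{n-i}$, while the equality case consists of trees $((a,u),w)$ with $u,w\in\HallSetOfLength{c}{k}$ and $u\geq w$ in the within-set order on $\HallSetOfLength{c}{k}$---precisely $\OddHallSetC{k}$. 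Composing $[a,c]\equiv -b$ with $[b,c]\equiv 2c$ gives $\gamma_c(((a,v),w))=-2\gamma_c(v)\gamma_c(w)$.

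Finally, axiom \eqref{enum:H2} ($h<h''$) is automatic: if $h$ and $h''$ share a type then $|h|>|h''|$ and \eqref{enum:Order1} applies; otherwise $h''$ is of strictly higher type by inspection of the case analysis and \eqref{enum:Order2} applies. Finiteness of each $\HallSetOfLength{d}{n}$ is clear from the finite recursion, the total orderings are built directly into the lexicographic prescriptions of (a)--(c), and $\HallSetOfLength{o}{n}$ may be ordered arbitrarily without affecting subsequent arguments since its elements are eventually killed by $\Ideal{\aOne}$.
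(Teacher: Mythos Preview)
Your proof is correct and follows essentially the same strategy as the paper: a case analysis over the type-pair $(d',d'')$ of $(h',h'')$ verifying the Hall-set axioms \eqref{enum:H1}--\eqref{enum:H3}. The paper's proof is a bare verification that the already-given sets satisfy \eqref{enum:H2} and \eqref{enum:H3}, whereas you present it as a \emph{derivation} of the recursive shapes (a)--(c) from the axioms, which nicely explains why the extra family $\OddHallSetC{k}$ appears exactly when $n$ is odd. You also fold in the computation of the coefficients $\gamma_d$ and the congruence \eqref{eq:HallPolGamma}; the paper postpones this to Lemmas~\ref{lem:FunctionGamma} and~\ref{lem:HallElemA}, so strictly speaking that material is surplus to the lemma as stated, but it does no harm. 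One small point of presentation: your check of axiom \eqref{enum:H2} should mention that for $h\in\HallSetOfLength{o}{n}$ there is nothing to prove, since part~(d) builds \eqref{enum:H2} and \eqref{enum:H3} into the definition of $\HallSetOfLength{o}{n}$.
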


\begin{remark}
The elements in $\HallSetOfLength{o}{}$ are not important for our purposes, and in fact we could consider any other elements which span the space $\Span\SetSuchThat{\HallPol{h}}{h\in\HallSetOfLength{o}{}}\subset\Lie{\Letters}$ together with the elements in $\HallSetOfLength{a}{}\cup \HallSetOfLength{b}{} \cup\HallSetOfLength{c}{}$ give a Hall set.
\end{remark}

From (b) and (c) of the above lemma, we conclude that
\begin{corollary}
\label{cor:HallSetC}
In the set $\HallSetOfLength{c}{n}$ there are only elements of three types, i.e.,
\begin{enumerate}[(a)]
\item $((a,v_1),v_2)$, where $v_1\in\HallSetOfLength{c}{i-1}$, $v_2\in\HallSetOfLength{c}{n-i}$, where $1\leq i \leq (n+1)/2$, and  $v_1 \geq v_2$;
\item $(b,v)$, where $v\in\HallSetOfLength{c}{n-1}$;
\item $c\in\HallSetOfLength{c}{1}$.
\end{enumerate}
\end{corollary}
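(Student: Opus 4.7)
\textbf{Proof proposal for Corollary \ref{cor:HallSetC}.} The plan is simply to unpack the recursive definition of $\HallSetOfLength{c}{n}$ given in Lemma \ref{lem:HallSet} (parts (b) and (c)) and classify the Hall trees that appear. The case $n=1$ is immediate from $\HallSetOfLength{c}{1}=\{c\}$, giving type (c). For $n\geq 2$, Lemma \ref{lem:HallSet}(c) writes $\HallSetOfLength{c}{n}$ as a union of products $\HallSetOfLength{b}{i}\HallSetOfLength{c}{n-i}$ for $1\leq i\leq k$ (where $k=\lfloor n/2\rfloor$), together with $\OddHallSetC{k}$ when $n=2k+1$ is odd. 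I then split the union according to whether $i=1$ or $i\geq 2$: by Lemma \ref{lem:HallSet}(b), $\HallSetOfLength{b}{1}=\{b\}$, so the $i=1$ contribution consists exactly of pairs $(b,w)$ with $w\in\HallSetOfLength{c}{n-1}$, which is type (b). For $i\geq 2$, every $v\in\HallSetOfLength{b}{i}$ has the form $v=(a,v_1)$ with $v_1\in\HallSetOfLength{c}{i-1}$, again by Lemma \ref{lem:HallSet}(b), so each element of $\HallSetOfLength{b}{i}\HallSetOfLength{c}{n-i}$ is of the form $((a,v_1),v_2)$ with $v_1\in\HallSetOfLength{c}{i-1}$ and $v_2\in\HallSetOfLength{c}{n-i}$, which is type (a). The remaining $\OddHallSetC{k}$ (when $n=2k+1$) is by definition $\{((a,v_1),v_2):v_1,v_2\in\HallSetOfLength{c}{k},\ v_1\geq v_2\}$, which is type (a) with $i=k+1=(n+1)/2$.

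It remains to match the index range $1\leq i\leq (n+1)/2$ and to check that $v_1\geq v_2$ holds in all of case (a). For $2\leq i<(n+1)/2$ we have $|v_1|=i-1<n-i=|v_2|$; by the ordering rule \eqref{enum:Order1}, longer Hall trees are smaller, so $v_1>v_2$ automatically, and moreover this is exactly what makes $((a,v_1),v_2)$ satisfy condition \eqref{enum:H3} of the Hall set (the $h'=(a,v_1)$ is not a letter, so one needs $v_1\geq v_2$). For $i=(n+1)/2$, which only occurs when $n$ is odd, the two components have equal length $k$, and the condition $v_1\geq v_2$ is imposed explicitly in the definition of $\OddHallSetC{k}$. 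Finally, the range $i>(n+1)/2$ is impossible in $\HallSetOfLength{c}{n}$: it would give $|v_1|>|v_2|$, hence $v_1<v_2$, contradicting the Hall condition \eqref{enum:H3}; this is why it does not appear in Lemma \ref{lem:HallSet}(c). No step here is a genuine obstacle — the entire argument is a bookkeeping check — and the only point that warrants care is matching the combinatorial bound $i\leq (n+1)/2$ to the length-reversing nature of the ordering on $\HallSetOfLength{c}{}$.
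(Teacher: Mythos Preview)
Your proposal is correct and follows exactly the approach the paper indicates: the paper simply says ``From (b) and (c) of the above lemma, we conclude that'' before stating the corollary, and your argument is precisely the bookkeeping that unpacks Lemma \ref{lem:HallSet}(b) and (c). Your additional verification of the ordering condition $v_1\geq v_2$ via rule \eqref{enum:Order1} and of the index bound $i\leq (n+1)/2$ is a welcome elaboration of details the paper leaves implicit.
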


The above lemma is almost self explanatory, nevertheless we prove it.

\begin{proof}[Proof of Lemma \ref{lem:HallSet}.]
The condition (\ref{enum:H1}) in the definition of a  Hall set is obvious. 
Let us prove that the condition (\ref{enum:H2}) is satisfied. We proceed by induction on the length $n$ of the foliage of trees. The base assumption obviously comes from the condition (\ref{enum:H1}). Assume $n >1$.
If $h\in\HallSetOfLength{a}{n}$, then  it is of the form $h=(h',v)$, where $h'\in\HallSetOfLength{a}{n - |v|}$ and $v\in\HallSetOfLength{b}{|v|}$, so by \eqref{enum:Order2} we have $h < v$.  
If $h\in\HallSetOfLength{b}{n}$, then $h = (a,v)$, where $v\in\in\HallSetOfLength{c}{n - 1}$, so \eqref{enum:Order2} implies $h < v$.
If $h\in\HallSetOfLength{c}{n}$, then $h = (h',w)$, where $h'\in\HallSetOfLength{b}{n - |w|}$ and $w\in\HallSetOfLength{c}{|w|}$, so \eqref{enum:Order1} implies $h < v$. For $h\in\HallSetOfLength{o}{n}$ there is nothing to prove.

Now let us check the condition (\ref{enum:H3}). Assume $h = (h',h'')\in\HallSetAOne{}$. 
Then $h\in\HallSetOfLength{a}{n}$ iff $h'\in\HallSetOfLength{a}{n - |h''|}$ and $h''\in\HallSetOfLength{b}{|h''|}$, so  by \eqref{enum:Order2} $h' < h''$, and $h' = a$ or $h' = (x,y)$ with $x\in\HallSetOfLength{a}{|h'|-|y|}$ and $y\in\HallSetOfLength{b}{|y|}$ such that $y\geq h''$.
Also $h\in\HallSetOfLength{b}{n}$ iff $h'=a$ and $h''\in\HallSetOfLength{c}{n-1}$, so  by \eqref{enum:Order2} $a < h''$.
Finally, $h\in\HallSetOfLength{c}{n}$ iff $h'\in\HallSetOfLength{b}{n - |h''|}$ and $h''\in\HallSetOfLength{c}{|h''|}$, so  by \eqref{enum:Order2} $h' < h''$, and  by the above corollary $h' = b$ or $h' = (x,y)$ with $x = a$, $y\in\HallSetOfLength{c}{|y|}$ and $y \geq h''.$
For $h\in\HallSetOfLength{o}{n}$ once again there is nothing to prove.
\end{proof}

\begin{remark}
The descriptions of the sets $\OddHallSetC{k}$ are not explicit. But observe that if we assume $\HallSetOfLength{c}{k} := \Set{v_1,\ldots,v_q}$ with the ordering $v_1 > \cdots > v_q$,
then it is easy to see that
\begin{align*}
\OddHallSetC{k} = \bigcup_{i=1}^q \SetSuchThat{((a,v_i),w)\in\FreeMagma\Letters}{w = v_{i}, v_{i+1}, \ldots, v_q}.
\end{align*}
\end{remark}

In Table \ref{tab:HallElements} we show all the Hall words of type $a,b$ and $c$ of length less then 6. We abbreviate notation by omitting commas and the most exterior brackets.
\begin{table}[ht]
\centering
\begin{tabular}{cccc}
$\HallSetOfLength{d}{n}$ & $d=a$ & $d=b$ & $d=c$ \\
$n=1$ & $a$ & $b$ & $c$  \\
\hline
$n=2$ & $(ab)$ & $(ac)$  & $(bc)$ \\
\hline
$n=3$ & $a(ac)$ & $a(bc)$ & $b(bc)$ \\
& $(ab)b$ &  & $(ac)c$ \\
\hline
$n=4$ & $a(a(bc))$ & $a(b(bc))$ & $b(b(bc))$ \\
& $(ab)(ac)$ & $a((ac)c)$ & $b((ac)c)$ \\
& $((ab)b)b$ &  & $(ac)(bc)$ \\
\hline
n=5 & $a(a(b(bc)))$ & $a(b(b(bc)))$ & $b(b(b(bc)))$ \\
& $a(a((ac)c))$ & $a(b((ac)c))$ & $b(b((ac)c))$ \\
& $(ab)(a(bc))$ & $a((ac)(bc))$ & $b((ac)(bc))$ \\
& $(a(ac))(ac)$ && $(ac)(b(bc))$ \\
& $((ab)b)(ac)$ && $(ac)((ac)c)$ \\
& $(((ab)b)b)b$ && $(a(bc))(bc)$ \\
\hline
&$\vdots$ & $\vdots$ & $\vdots$
\end{tabular}
\caption{Hall elements of type $a,b$ and $c$ of the length less then 6. The ordering is from up to down, and then from right to left.}
\label{tab:HallElements}
\end{table}

\section{Proofs of Theorems}

In the previous section we constructed the Hall set $\HallSetAOne{}$ suitable for our problem. Now we use it together with the Kawski-Sussmann solution \eqref{eq:GeneralSolution}. Namely, we assume that $\LettersGeneral = \Letters = \Set{a,b,c}$, that $Y_d = X_d$ for $d= a,b,c$, and that the $Y_d's$ satisfy \eqref{eq:a1LieBrackets}.  Using the Hall set $\HallSetAOne{}$, we know by \eqref{eq:GeneralSolution}, that 
\begin{align}
x(t) = \DecreasingProduct_{h\in\HallSetAOne{}{}} \VectorFlow{\IterIntegral{h}{t}\cdot X_h}(x_0)
\end{align}
is a solution of \eqref{eq:DifferentialEquation}. But $\HallSetAOne{}$ has special properties, which we now use. Namely, $\HallSetAOne{} = \HallSetOfLength{o}{}\cup\HallSetOfLength{a}{}\cup \HallSetOfLength{b}{}\cup\HallSetOfLength{c}{}$, each $\HallSetOfLength{d}{}$ has the ordering of the type $\omega$, and for $h_o\in\HallSetOfLength{o}{}, h_a\in\HallSetOfLength{a}{}, h_b\in\HallSetOfLength{b}{}, h_c\in\HallSetOfLength{c}{}$, we have $h_o < h_a < h_b < h_c$. Therefore, 
\begin{align*}
\DecreasingProduct_{h\in\HallSetAOne{}} \VectorFlow{\IterIntegral{h}{t}\cdot X_h}(x_0) & = 
\DecreasingProduct_{h\in\HallSetOfLength{c}{}} \VectorFlow{\IterIntegral{h}{t}\cdot X_h}
\DecreasingProduct_{h\in\HallSetOfLength{b}{}} \VectorFlow{\IterIntegral{h}{t}\cdot X_h} \\
&\qquad\circ
\DecreasingProduct_{h\in\HallSetOfLength{a}{}} \VectorFlow{\IterIntegral{h}{t}\cdot X_h}
\DecreasingProduct_{h\in\HallSetOfLength{o}{}} \VectorFlow{\IterIntegral{h}{t}\cdot X_h}(x_0).
\end{align*}
Moreover, if $h\in\HallSetOfLength{d}{}$ ($d=a,b,c$), then $X_{h} = \gamma_d(h)\cdot X_d$ with certain well defined functions $\gamma_d:\HallSetOfLength{d}{}\to\ZZ$ (we give explicit expressions for $\gamma_b$ and $\gamma_c$ in Lemma \ref{lem:FunctionGamma}), and if $h\in\HallSetOfLength{o}{}$, then $X_{h} \equiv 0$. Thus it follows that
\begin{multline*}
x(t) = 
\DecreasingProduct_{h\in\HallSetAOne{}} \VectorFlow{\IterIntegral{h}{t}\cdot X_h}(x_0)  = 
\VectorFlow{\sum_{h\in\HallSetOfLength{c}{}} \IterIntegral{h}{t}\gamma_c(h)\cdot X_c} \\
\circ
\VectorFlow{\sum_{h\in\HallSetOfLength{b}{}} \IterIntegral{h}{t}\gamma_b(h)\cdot X_b}
\VectorFlow{\sum_{h\in\HallSetOfLength{a}{}} \IterIntegral{h}{t}\gamma_a(h)\cdot X_a}(x_0).
\end{multline*}
Using the definition \eqref{eqn:IterIntegral} of $\IterIntegral{h}{t}$ we get
\begin{align*}
\sum_{h\in\HallSetOfLength{d}{}} \IterIntegral{h}{t}\gamma_d(h) =  \Upsilon^t \left(\sum_{h\in\HallSetOfLength{d}{}} \gamma_d(h)\  \HallDualPol{h}\right)
\end{align*}
(recall, that we use a natural abbreviation $\HallDualPol{h} := \HallDualPol{\Foliage h}$ for $h\in\HallSetAOne{}$).
In order to prove Theorem \ref{thm:Main} we must show that 
\begin{align*}
\sum_{h\in\HallSetOfLength{a}{}} \gamma_a(h)\  \HallDualPol{h} &= a\, \ExpShuffle{2\AOneSeries}, &
\sum_{h\in\HallSetOfLength{b}{}} \gamma_b(h)\  \HallDualPol{h} &= {\AOneSeries}, &
\sum_{h\in\HallSetOfLength{c}{}} \gamma_c(h)\  \HallDualPol{h} &= \ExpShuffle{2\AOneSeries}\, c, 
\end{align*}
where $\AOneSeries\in\Series{\RR}{\Letters}$ is the unique series satisfying $\AOneSeries =  b - a\, \ExpShuffle{2\AOneSeries}\, c$. To do this, we first
prove Theorem \ref{thm:MainPrim} (stated below) in which we simply change the series $\AOneSeries\in\Series{\RR}{\Letters}$ from Theorem \ref{thm:Main} into the main series $\MainSeries$ (defined in (\ref{eq:MainSeries})), and then show that $\MainSeries$ is in fact the unique solution of the algebraic equation $\MainSeries =  b - a\, \ExpShuffle{2\MainSeries}\, c$, i.e., prove Proposition \ref{prop:MainSeries}.

\begin{theoremprim}
\label{thm:MainPrim}
Let $X_a, X_b, X_c \in \TangentFields{\Manifold}$ be smooth tangent vector fields satisfying \eqref{eq:a1LieBrackets}. Then (locally)
the solution $x:[0,T]\to\Manifold$ of the differential equation \eqref{eq:DifferentialEquation} is of the form
\begin{align}
\tag{\ref{eq:Solution}}
x(t) = \VectorFlow{\Xi_c(t) X_c} \VectorFlow{\Xi_b(t) X_b} \VectorFlow{\Xi_a(t) X_a} (x_0).
\end{align}
Here, $\Xi_a, \Xi_b, \Xi_c : [0,T] \to \RR$ are given by $\Xi_d(t) := \Upsilon^t(\WordsInSeriesOfLength{d}{})$ (for $d = a, b, c$), where 
\begin{align*}
 \WordsInSeriesOfLength{a}{} &= a\, \ExpShuffle{2\MainSeries}, &
\WordsInSeriesOfLength{b}{} &= {\MainSeries}, &
\WordsInSeriesOfLength{c}{} &= \ExpShuffle{2\MainSeries}\, c, 
\end{align*}
and $\MainSeries\in\Series{\RR}{\Letters}$ is defined in (\ref{eq:MainSeries}).
Additionally, we have
\begin{align}
\label{eq:Additional}
b - a\WordsInSeriesOfLength{c}{} = \WordsInSeriesOfLength{b}{} = b - \WordsInSeriesOfLength{a}{} c.
\end{align}
\end{theoremprim}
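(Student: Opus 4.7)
The plan is to apply the Kawski--Sussmann formula \eqref{eq:GeneralSolution} with the Hall set $\HallSetAOne{}$ built in Section \ref{sec:HallSet}, and to identify the resulting Hall-basis expansion with the triple product of flows claimed in Theorem \ref{thm:MainPrim}. The factorisation already displayed just above the statement (using that $\HallSetAOne{}$ splits as $\HallSetOfLength{o}{}\cup\HallSetOfLength{a}{}\cup\HallSetOfLength{b}{}\cup\HallSetOfLength{c}{}$ with $h_o<h_a<h_b<h_c$, together with $X_h=0$ on $\HallSetOfLength{o}{}$ and $X_h=\gamma_d(h)X_d$ on $\HallSetOfLength{d}{}$) reduces the task to proving
\begin{align*}
\sum_{h\in\HallSetOfLength{a}{}}\gamma_a(h)\HallDualPol{h} &= a\,\ExpShuffle{2\MainSeries}, &
\sum_{h\in\HallSetOfLength{b}{}}\gamma_b(h)\HallDualPol{h} &= \MainSeries, &
\sum_{h\in\HallSetOfLength{c}{}}\gamma_c(h)\HallDualPol{h} &= \ExpShuffle{2\MainSeries}\,c.
\end{align*}
Once these three identities are in hand, \eqref{eq:Additional} is immediate from $\MainSeries = b - a\,\ExpShuffle{2\MainSeries}\,c$ (Proposition \ref{prop:MainSeries}), applied to the definitions of $\WordsInSeriesOfLength{a}{}$, $\WordsInSeriesOfLength{b}{}$, $\WordsInSeriesOfLength{c}{}$.

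The keystone is the middle identity. First I would show, by induction on $|h|$, that $\HallDualPol{h}=\FunctionXi{\Foliage h}$ for every $h\in\HallSetOfLength{b}{}\cup\HallSetOfLength{c}{}$: the recursive clauses defining $\FunctionXi{\cdot}$ in Section \ref{sec:Results} were engineered to match Proposition \ref{prop:HallDualPol}, with the rule $\HallDualPol{(a,v)}=a\HallDualPol{v}$ from part (ii) covering the $b$-case and the $(b,v)$-branch of the $c$-case, and part (iii) (which writes the dual of a concatenation of Hall words as a weighted shuffle of their duals) realising the rule $\FunctionXi{avw}=a\FunctionXi v \ShuffleWeightedProduct \FunctionXi w$ on the exceptional trees of $\OddSetC{k}$. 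In parallel I would compute $\gamma_b$ and $\gamma_c$ by induction using Lemma \ref{lem:HallSet} and the $\aOne$-relations \eqref{eq:a1LieBrackets}: for $h=(a,v)\in\HallSetOfLength{b}{n}$ one has $\HallPol h \equiv -\gamma_c(v)\, b \pmod{\Ideal{\aOne}}$, hence $\gamma_b(h)=-\gamma_c(v)$, and analogous two-case computations for the three types of Hall trees in $\HallSetOfLength{c}{}$ (Corollary \ref{cor:HallSetC}) yield the closed form $\gamma_b(h)=\FunctionGamma b(\Foliage h)$ from \eqref{eq:FunctionGammaB}. Summing over $\HallSetOfLength{b}{}$ then reproduces exactly the definition \eqref{eq:MainSeries} of $\MainSeries$.

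For the $c$-identity I would use Proposition \ref{prop:ExpShuffle} to expand $\ExpShuffle{2\MainSeries}$ as a sum indexed by non-increasing tuples $w_1\geq\cdots\geq w_i$ in $\HallWordsOfLength{b}{}$; appending $c$ on the right converts each tuple into the foliage of a Hall tree in $\HallSetOfLength{c}{}$ of one of the three kinds listed in Corollary \ref{cor:HallSetC}, and Proposition \ref{prop:HallDualPol}(iii) then identifies the weighted shuffle $\FunctionXi{w_1}\ShuffleWeightedProduct\cdots\ShuffleWeightedProduct\FunctionXi{w_i}\,c$ with the corresponding $\HallDualPol h$. The numerical coefficients line up with $\gamma_c(h)$ by the $\gamma$-recursion developed for the $b$-case. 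The $a$-identity is handled symmetrically, or can be obtained by applying the antihomomorphism $\Antipode\FlipHom$ of Proposition \ref{prop:MainSeriesSymmetry} to the $c$-identity. I expect the main obstacle to be precisely this bookkeeping in the $c$-case: the $1/(i_1!\cdots i_k!)$ factors built into $\ShuffleWeightedProduct$ are the exact device needed to repackage a sum over Hall trees into a sum over non-increasing tuples of Hall words, and verifying that these multiplicities agree with the $\gamma_c$-coefficients coming from the $\aOne$-brackets is the most delicate step.
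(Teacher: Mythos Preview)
Your treatment of the $b$–identity is essentially the paper's (Lemmas \ref{lem:HallSetAndWords} and \ref{lem:FunctionGamma}), and your plan to read off the first equality in \eqref{eq:Additional} from $\gamma_b((a,v))=-\gamma_c(v)$ is exactly what the paper does. The difficulties lie elsewhere.

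First, you have inverted the roles of $a$ and $c$ relative to the paper, and this matters. The paper proves the $a$–identity combinatorially and the $c$–identity by an analytic argument. The reason is structural: by Lemma \ref{lem:HallSet}(a) the set $\HallSetOfLength{a}{}$ consists of left–combs $(\cdots(a,v_1)\cdots,v_i)$ with $v_1\geq\cdots\geq v_i$ in $\HallSetOfLength{b}{}$, so there is a clean bijection between $\HallSetOfLength{a}{}$ and non-increasing $\HallSetOfLength{b}{}$–tuples, with $\Foliage h=a\,\Foliage{v_1}\cdots\Foliage{v_i}$. This is what makes Lemma \ref{lem:HallElemA} and the $\ExpShuffle{2\MainSeries}$ computation go through. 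There is no analogous bijection on the $c$–side: your claim that ``appending $c$ on the right converts each tuple into the foliage of a Hall tree in $\HallSetOfLength{c}{}$'' is false. For instance the single $b$–word $abc\in\HallWordsOfLength{b}{3}$ gives $abcc$, but the foliages in $\HallSetOfLength{c}{4}$ are $bbbc$, $bacc$, $acbc$ (Table \ref{tab:HallElements}). The equality $\sum_{h\in\HallSetOfLength{c}{}}\gamma_c(h)\HallDualPol{h}=\ExpShuffle{2\MainSeries}\,c$ still holds, but only as an identity of sums, not term by term; a direct combinatorial proof would require substantially more than Proposition \ref{prop:HallDualPol}(iii). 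The paper bypasses this by differentiating the composite flow \eqref{eq:SolutionPrim}, obtaining the triangular system \eqref{eq:Controls}, and reading off $\dot{\tilde\Xi}_c=u_c\,e^{2\Xi_b}$, from which $\WordsInSeriesOfLength{c}{}=\ExpShuffle{2\MainSeries}\,c$ and the second equality in \eqref{eq:Additional} follow by one integration each.

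Second, your appeals to Propositions \ref{prop:MainSeries} and \ref{prop:MainSeriesSymmetry} are circular in the paper's logical order: both are proved \emph{after} Theorem \ref{thm:MainPrim}, the former from the equality $\WordsInSeriesOfLength{b}{}=b-\WordsInSeriesOfLength{a}{}c$ that is part of \eqref{eq:Additional}, and the latter from the former. So you cannot use Proposition \ref{prop:MainSeries} to deduce \eqref{eq:Additional}, nor Proposition \ref{prop:MainSeriesSymmetry} to transport the $c$–identity to the $a$–identity. The paper obtains the first half of \eqref{eq:Additional} directly from the $\gamma_b/\gamma_c$ recursion, the $a$–identity combinatorially, and the $c$–identity together with the second half of \eqref{eq:Additional} from the ODE argument; only then does $\MainSeries=b-a\ExpShuffle{2\MainSeries}c$ follow.
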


\subsection{Proof of Theorem \ref{thm:MainPrim}b with the first equality in (\ref{eq:Additional})}

The following lemma demonstrates the connection between the mapping $\FunctionXi{\cdot}:\HallWordsOfLength{}{}\to\Polynomials{\RR}{\Letters}$, defined in section \ref{sec:Results}, and $\HallDualPol{\cdot}:\HallSetAOne{}\to\Polynomials{\RR}{\Letters}$, via the restriction of the correspondence between the set of words $\WordsOfLength{}$ and the dual basis in $\Series{\RR}{\Letters}$.

\begin{lemma}
\label{lem:HallSetAndWords}
Consider the Hall set $\HallSetAOne{}$ with its ordering $\leq$ defined in section \ref{sec:HallSet}, and the set of words $\HallWordsOfLength{}{} = \bigcup_{d=b,c}\bigcup_{n=1}^{\infty}\HallWordsOfLength{d}{n}\subset \WordsOfLength{}$ with ordering $\leq$  defined in section \ref{sec:Results}. Then for $d=b,c$, and $n\in\NN$, the Hall trees in $\HallSetOfLength{d}{n}$ correspond with the Hall words in $\HallWordsOfLength{d}{n}$, and the correspondence respects the orderings, i.e., for each $h\in\FreeMagma{\Letters}$, $h\in\HallSetOfLength{d}{n}$ iff $\Foliage{h}\in\HallWordsOfLength{d}{n}$, and $v < w$ iff $\Foliage{v} < \Foliage{w}$ for all $v,w\in\HallSetOfLength{b}{}\cup\HallSetOfLength{c}{}$.
Moreover, for each $h\in \HallWordsOfLength{b}{}\cup\HallWordsOfLength{c}{}$, 
$$
\HallDualPol{h} = \FunctionXi{h}.
$$
\end{lemma}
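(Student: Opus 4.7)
The plan is to prove both claims simultaneously by induction on the word length $n$. The base case $n=1$ is immediate: $\HallSetOfLength{d}{1}=\{d\}=\HallWordsOfLength{d}{1}$ for $d\in\{b,c\}$, and Proposition \ref{prop:HallDualPol}(ii) combined with $\HallDualPol{\EmptyWord}=1$ gives $\HallDualPol{d}=d=\FunctionXi{d}$.

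For the inductive step, the first assertion is almost tautological: the recursive production rules in Lemma \ref{lem:HallSet}(b)--(c) are literally the $\Foliage{\cdot}$-preimages of the rules in section \ref{sec:Results}. A tree $(a,v)\in\HallSetOfLength{b}{n}$ has foliage $a\Foliage{v}\in a\HallWordsOfLength{c}{n-1}=\HallWordsOfLength{b}{n}$; a tree $(v',w)\in\HallSetOfLength{c}{n}$ with $v'\in\HallSetOfLength{b}{i}$, $w\in\HallSetOfLength{c}{n-i}$ has foliage in $\HallWordsOfLength{b}{i}\HallWordsOfLength{c}{n-i}$; and $((a,v),w)\in\OddHallSetC{k}$ yields $a\Foliage{v}\Foliage{w}\in\OddSetC{k}$. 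The ordering rules on both sides are phrased identically in terms of the orderings on the constituent pieces, so the induction hypothesis lifts order preservation to level $n$.

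For $\HallDualPol{h}=\FunctionXi{h}$, every $h$ of length $>1$ begins with a letter $\ell\in\Letters$, and Proposition \ref{prop:HallDualPol}(ii) gives $\HallDualPol{h}=\ell\,\HallDualPol{\widetilde h}$ where $\widetilde h$ is the tail. If the tail is a single Hall word --- i.e.\ $h=av$ with $v\in\HallWordsOfLength{c}{n-1}$, or $h=bw$ with $w\in\HallWordsOfLength{c}{n-1}$ --- the induction hypothesis gives $\HallDualPol{\widetilde h}=\FunctionXi{\widetilde h}$, matching the corresponding clause of $\FunctionXi$. The substantive case is $h=avw$ arising from a Hall tree $((a,v),w)$ (either from $\OddHallSetC{k}$, or, in the even-length clause, via $v'=(a,v)\in\HallSetOfLength{b}{i}$ with $i\geq 2$); condition \ref{enum:H3} forces $v\geq w$ as Hall words, so the word $vw$ has the unique non-increasing Hall-word factorization $v\cdot w$ (if $v>w$) or $v^{2}$ (if $v=w$). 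Proposition \ref{prop:HallDualPol}(iii) then computes $\HallDualPol{vw}$ as $\FunctionXi{v}\ShuffleProduct\FunctionXi{w}$ or $\tfrac{1}{2}\FunctionXi{v}^{\ShuffleProduct 2}$, which in either case is $\FunctionXi{v}\ShuffleWeightedProduct\FunctionXi{w}$ by definition of $\ShuffleWeightedProduct$; prepending $a$ reproduces $\FunctionXi{avw}$.

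The main bookkeeping step, and the only place where care is required, is the exhaustive classification of $\HallWordsOfLength{c}{n}$ into the three patterns $c$, $bw$, and $avw$ with $v\geq w$: this is Corollary \ref{cor:HallSetC} transported across the foliage bijection, combined with the factorization $\HallWordsOfLength{b}{i}=a\HallWordsOfLength{c}{i-1}$ for $i\geq 2$. Together these guarantee that every Hall word in $\HallWordsOfLength{b}{}\cup\HallWordsOfLength{c}{}$ is covered by exactly one recursive clause of $\FunctionXi$, so the inductive identification of $\HallDualPol{}$ with $\FunctionXi{}$ is complete.
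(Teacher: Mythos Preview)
Your proof is correct and follows exactly the approach the paper intends. The paper's own proof is a two-sentence sketch: it calls the foliage correspondence ``as trivial as erasing the brackets'' and says the identity $\HallDualPol{h}=\FunctionXi{h}$ ``comes easily from Proposition~\ref{prop:HallDualPol}'' by induction; you have simply supplied the details of that induction, including the case split via Corollary~\ref{cor:HallSetC} and the identification of $\HallDualPol{vw}$ with $\FunctionXi{v}\ShuffleWeightedProduct\FunctionXi{w}$ through Proposition~\ref{prop:HallDualPol}(iii).
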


\begin{remark}
In other words, the above lemma states that the elements of $\HallWordsOfLength{d}{n}$ are the Hall words corresponding to the Hall trees in $\HallSetOfLength{d}{n}$, for $d=b,c$ and $n\in\NN$.
\end{remark}

\begin{proof}
The correspondence $h\in\HallSetOfLength{d}{n} \Longleftrightarrow\Foliage{h}\in\HallWordsOfLength{d}{n}$ (which respects the orderings) is as trivial as erasing the brackets for the Hall trees, which in fact the foliage mapping does. 
An inductive proof of the equality  $\HallDualPol{h} = \FunctionXi{h}$ comes easily from Proposition \ref{prop:HallDualPol}, and the proved correspondence between elements in the Hall set and the Hall words, together with the definitions of these sets.
\end{proof}

\begin{lemma}
\label{lem:FunctionGamma}
The functions $\gamma_b:\HallSetOfLength{b}{}\to\RR$ (defined in (\ref{eq:HallPolGamma})) and $\FunctionGamma{b}:\HallWordsOfLength{b}{}\to\RR$ (defined in (\ref{eq:FunctionGammaB})) satisfy $\gamma_b(h) = \FunctionGamma{b}(\Foliage h)$ for all $h\in\HallSetOfLength{b}{}$, i.e.,
$\gamma_b(b) = 1$ and $\gamma_b((a,v)) = -(-2)^{|\Foliage v|_a}\cdot 2^{|\Foliage{v}|_b}$ for each $(a,v)\in\HallSetOfLength{b}{}$ ($v\in\HallSetOfLength{c}{}$). Moreover, $\gamma_c(v) = -\gamma_b((a,v))$ for each $v\in\HallSetOfLength{c}{}$.
\end{lemma}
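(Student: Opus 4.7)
The plan is to run a simultaneous induction on the length $n=|h|$ for $h\in\HallSetOfLength{b}{}\cup\HallSetOfLength{c}{}$, producing along the way an explicit expression $\gamma_c(v) = (-2)^{|\Foliage v|_a}\cdot 2^{|\Foliage v|_b}$ for $v\in\HallSetOfLength{c}{}$; then the identity $\gamma_b((a,v)) = -\gamma_c(v)$, which is also the second assertion of the lemma, immediately yields the claimed formula for $\gamma_b$. The base case $n=1$ is immediate from $\HallPol{b}=b$ and $\HallPol{c}=c$, giving $\gamma_b(b)=\gamma_c(c)=1$, in agreement with $(-2)^0\cdot 2^0=1$.

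For the inductive step I would use the precise shape of Hall trees in $\HallSetOfLength{b}{n}$ and $\HallSetOfLength{c}{n}$ recorded in Lemma \ref{lem:HallSet} and Corollary \ref{cor:HallSetC}, together with the defining relations of $\aOne$ (i.e.\ $[a,c]\equiv -b$ and $[b,c]\equiv 2c$ modulo $\Ideal{\aOne}$). First, any $h\in\HallSetOfLength{b}{n}$ is of the form $h=(a,v)$ with $v\in\HallSetOfLength{c}{n-1}$, so
\begin{align*}
\HallPol{(a,v)} = [a,\HallPol{v}] \equiv \gamma_c(v)\,[a,c] \equiv -\gamma_c(v)\cdot b \pmod{\Ideal{\aOne}},
\end{align*}
which gives $\gamma_b((a,v)) = -\gamma_c(v)$. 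Second, any $h\in\HallSetOfLength{c}{n}$ with $n\geq 2$ has the form $(h',h'')$ with $h'\in\HallSetOfLength{b}{}$ and $h''\in\HallSetOfLength{c}{}$, hence
\begin{align*}
\HallPol{h} \equiv \gamma_b(h')\gamma_c(h'')\,[b,c] \equiv 2\gamma_b(h')\gamma_c(h'')\cdot c \pmod{\Ideal{\aOne}},
\end{align*}
so $\gamma_c((h',h''))=2\gamma_b(h')\gamma_c(h'')$.

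Combining these two recursions, the explicit formula for $\gamma_c$ is verified by splitting $v\in\HallSetOfLength{c}{}$ into the three cases from Corollary \ref{cor:HallSetC}. For $v=c$ both sides equal $1$. For $v=(b,w)$ we have $\gamma_c(v)=2\gamma_c(w)$ and $(|\Foliage v|_a,|\Foliage v|_b)=(|\Foliage w|_a,|\Foliage w|_b+1)$, so the inductive hypothesis closes the case. For $v=((a,v_1),v_2)$ with $v_1,v_2\in\HallSetOfLength{c}{}$, the two recursions give $\gamma_c(v)=2\gamma_b((a,v_1))\gamma_c(v_2) = -2\gamma_c(v_1)\gamma_c(v_2)$, while the inductive hypothesis supplies $\gamma_c(v_j)=(-2)^{|\Foliage{v_j}|_a}\cdot 2^{|\Foliage{v_j}|_b}$; since $|\Foliage v|_a = 1+|\Foliage{v_1}|_a+|\Foliage{v_2}|_a$ and $|\Foliage v|_b=|\Foliage{v_1}|_b+|\Foliage{v_2}|_b$, the extra factor $-2$ from the recursion is exactly the $(-2)^1$ contribution of the one new $a$ in the foliage, and the formula matches. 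I do not expect any serious obstacle: the only subtle point is this bookkeeping of the sign and the power of $2$ in the $((a,v_1),v_2)$ case, and once the commutators $[a,c]\equiv -b$ and $[b,c]\equiv 2c$ are inserted the exponents align automatically.
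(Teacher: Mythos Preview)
Your proposal is correct and follows essentially the same route as the paper: an induction on the foliage length establishing $\gamma_c(v)=(-2)^{|\Foliage v|_a}\cdot 2^{|\Foliage v|_b}$ via the cases of Corollary \ref{cor:HallSetC}, together with the relation $\gamma_b((a,v))=-\gamma_c(v)$ coming from $[a,c]\equiv -b$. The only organizational difference is that you first record the two recursions $\gamma_b((a,v))=-\gamma_c(v)$ and $\gamma_c((h',h''))=2\gamma_b(h')\gamma_c(h'')$ and then run the induction, whereas the paper computes the brackets directly inside each inductive case; the content is identical.
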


\begin{proof}
Recall that each $\gamma_d:\HallSetOfLength{d}{}\to\RR$ is defined by the formula $\HallPol{h} = \gamma_d(h)\cdot d \Mod \Ideal{\aOne}$. We first prove that 
$\gamma_c(v) = (-2)^{|\Foliage v|_a}\cdot 2^{|\Foliage{v}|_b}$ for each $v\in\HallSetOfLength{c}{}$. We proceed by induction on the length of the foliage of $v$. For $v=c$ we obviously have $\gamma_c(c) = 1$. Assume the statement is satisfied for $v\in\HallSetOfLength{c}{1}\cup\ldots\cup \HallSetOfLength{c}{n-1}$ and take $w\in\HallSetOfLength{c}{n}$. By Corollary \ref{cor:HallSetC} we must consider two cases.

\noindent Case I. $w = (a,v)$ where $v\in\HallSetOfLength{c}{n-1}$. Then
\begin{align*}
\HallPol{w} = [b,\HallPol{v}] = [b,\gamma_c(v)\cdot c] \Mod \Ideal{\aOne} =
2\gamma_c(v)\cdot c \Mod \Ideal{\aOne}.
\end{align*}
Therefore, using the inductive hypothesis, it follows that
\begin{align*}
\gamma_c(w) = 2\cdot (-2)^{|\Foliage v|_a}\cdot 2^{|\Foliage{v}|_b} = (-2)^{|\Foliage w|_a}\cdot 2^{|\Foliage{w}|_b}.
\end{align*}
\noindent Case II. $w = ((a,v_1),v_2)$ where $v_1\in\HallSetOfLength{c}{i-1}$ and $v_2\in\HallSetOfLength{c}{n-i}$. We compute
\begin{align*}
\HallPol{w} = [[a,\HallPol{v_1}],\HallPol{v_2}] = 
-2\gamma_c(v_1)\gamma_c(v_2)\cdot c \Mod \Ideal{\aOne}.
\end{align*}
By the inductive hypothesis we get
\begin{align*}
\gamma_c(w) = (-2)\cdot (-2)^{|\Foliage v_1|_a+|\Foliage v_2|_a}\cdot 2^{|\Foliage{v_1}|_b+|\Foliage{v_2}|_b} = (-2)^{|\Foliage w|_a}\cdot 2^{|\Foliage{w}|_b}.
\end{align*}
Thus we have proved the formula for $\gamma_c$.

Now let us focus on $d = b$. Obviously $\gamma_b(b) = 1$. From Lemma \ref{lem:HallSet}, we know that an element $w\in\HallSetOfLength{b}{n}$ (for $n > 1$) is of the form $w=(a,v)$, where $v\in\HallSetOfLength{c}{n-1}$, so 
\begin{align*}
\HallPol{w} = [a,\HallPol{v}] =  -\gamma_c(v)\cdot b \Mod \Ideal{\aOne}.
\end{align*}
Therefore $\gamma_b((a,v)) = \gamma_c(v)$, which together with the formula for $\gamma_c$, completes the proof.
\end{proof}

From these two lemmas we conclude that 
\begin{align*}
\sum_{h\in\HallSetOfLength{b}{}} \gamma_b(h)\  \HallDualPol{h} &= 
\sum_{w\in\HallWordsOfLength{b}{}} \FunctionGamma b(w)\  \HallDualPol{w} =
{\MainSeries},
\end{align*}
which proves that $\WordsInSeriesOfLength{b}{} = \MainSeries$.

Additionally, from Lemma \ref{lem:HallSet}(b), Proposition \ref{prop:HallDualPol}, and Lemma \ref{lem:FunctionGamma}, we get
\begin{align*}
\sum_{h\in\HallSetOfLength{b}{}} \gamma_b(h)\  \HallDualPol{h} & = 
\gamma_b(b)\ \HallDualPol{b} + \sum_{v\in\HallSetOfLength{b}{}} \gamma_b((a,v))\  \HallDualPol{av} \\
&=
b + \sum_{v\in\HallSetOfLength{c}{}} - \gamma_c(v)\  a\,\HallDualPol{v} = 
b - a \sum_{v\in\HallSetOfLength{c}{}}  \gamma_c(v)\  \HallDualPol{v}.
\end{align*} 
Together with Lemma \ref{lem:HallSetAndWords}, the previous equation proves the first additional equality $b - a\,\WordsInSeriesOfLength{c}{} = \WordsInSeriesOfLength{b}{}$ of formula (\ref{eq:Additional}) in Theorem \ref{thm:MainPrim}.

\subsection{Proof of Theorem \ref{thm:MainPrim}a  and Proposition \ref{prop:ExpShuffle}}

In this section we calculate the expression $\WordsInSeriesOfLength{a}{}$ in the assertion of Theorem \ref{thm:MainPrim}. 

From Lemma \ref{lem:HallSet}(a), 
\begin{align}
\nonumber
\HallSetOfLength{a}{} & = \Set{a}\cup \bigcup_{n=2}^\infty
\SetSuchThat{(\cdots(a,v_1)\cdots ,v_i)} {v_i\in\HallSetOfLength{b}{},\ v_1 \geq\cdots\geq v_i,\ |v_1|+\ldots+ |v_i| = n-1} \\
\label{eq:HallSetA}
& = 
\SetSuchThat{(\cdots(a,v_1)\cdots ,v_i)\in\FreeMagma{\Letters}} {i=0,1,2,\ldots,\ v_i\in\HallSetOfLength{b}{},\ v_1 \geq\cdots\geq v_i}.
\end{align}

\begin{lemma}
\label{lem:HallElemA}
For $h = (\cdots(a,v_1)\cdots ,v_i)\in\HallSetOfLength{a}{}$ such that $v_i\in\HallSetOfLength{b}{}$ and $v_1 \geq\cdots\geq v_i$, we have
\begin{enumerate}[(i)]
\item $\gamma_a(h) = 2^{i}\cdot\gamma_b(v_1)\cdots\gamma_b(v_i)$
\item $\HallDualPol{h} = a\ \HallDualPol{v_1}\ShuffleWeightedProduct \cdots\ShuffleWeightedProduct \HallDualPol{v_i}.$
\end{enumerate}
\end{lemma}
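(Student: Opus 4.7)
The plan is to prove (i) and (ii) simultaneously by induction on $i \geq 0$, exploiting the explicit nested form of elements of $\HallSetOfLength{a}{}$ recorded in (\ref{eq:HallSetA}). The base case $i=0$ corresponds to $h=a$ and is immediate: $\gamma_a(a)=1$ agrees with the empty product $2^0$, while $\HallDualPol{a}=a$ matches $a$ times the empty $\ShuffleWeightedProduct$-product.

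For the inductive step, I would write $h=(h',v_i)$ with $h'=(\cdots(a,v_1)\cdots,v_{i-1})\in\HallSetOfLength{a}{}$ still of the required form, and $v_i\in\HallSetOfLength{b}{}$ with $v_1\geq\cdots\geq v_i$. For part (i), compute $\HallPol{h}=[\HallPol{h'},\HallPol{v_i}]$ modulo $\Ideal{\aOne}$ using the inductive hypothesis $\HallPol{h'}\equiv \gamma_a(h')\cdot a$, the definition $\HallPol{v_i}\equiv \gamma_b(v_i)\cdot b$, and the relation $[a,b]\equiv 2a$ from (\ref{eq:a1LieBrackets}); this yields $\gamma_a(h)=2\gamma_a(h')\gamma_b(v_i)$, and substituting the inductive formula for $\gamma_a(h')$ closes (i).

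For part (ii), iterating $\Foliage{(t,t')}=\Foliage{t}\,\Foliage{t'}$ gives $\Foliage h=a\,\Foliage{v_1}\cdots\Foliage{v_i}$, so Proposition \ref{prop:HallDualPol}(ii) strips the leading letter to produce $\HallDualPol{h}=a\,\HallDualPol{\Foliage{v_1}\cdots\Foliage{v_i}}$. Since $v_1,\ldots,v_i\in\HallSetOfLength{b}{}$ correspond under $\Foliage{\cdot}$ to Hall words arranged in non-increasing order, I would group the sequence into maximal blocks of equal entries of multiplicities $i_1,\ldots,i_k$ and apply Proposition \ref{prop:HallDualPol}(iii) to the resulting Hall-word monomial; this rewrites the trailing factor as the factorial-weighted shuffle product of the distinct $\HallDualPol{v_j}$, which by the defining formula of $\ShuffleWeightedProduct$ is exactly $\HallDualPol{v_1}\ShuffleWeightedProduct\cdots\ShuffleWeightedProduct\HallDualPol{v_i}$.

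The only real subtlety is the last identification: the factorial denominators in Proposition \ref{prop:HallDualPol}(iii) are engineered precisely to match the definition of $\ShuffleWeightedProduct$, so once this notational alignment is explicitly stated, no further computation is needed and both parts follow directly from the inductive decomposition of $\HallSetOfLength{a}{}$ in Lemma \ref{lem:HallSet}(a).
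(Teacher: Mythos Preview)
Your proposal is correct and follows essentially the same approach as the paper. The only cosmetic difference is that for (i) the paper peels off the innermost factor (replacing $[a,v_1]$ by $2\gamma_b(v_1)\,a$ and then invoking the hypothesis on $(\cdots(a,v_2)\cdots,v_{j+1})$), whereas you peel off the outermost factor $v_i$; for (ii) the paper simply says ``immediate from Proposition \ref{prop:HallDualPol}'', and your write-up just makes that explicit.
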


The product $\ShuffleWeightedProduct$ in $\Series{\RR}{\Letters}$ is defined in analogy with the product defined in section \ref{sec:Results}.
Namely, 
for $v_1 = \cdots = v_{i_1}> \cdots > w_1 = \cdots = w_{i_k}$, where $i_1,\ldots,i_k\in\NN$, we define
\begin{align*}
\HallDualPol{v_1}\ShuffleWeightedProduct\cdots\ShuffleWeightedProduct\HallDualPol {w_{i_k}} := 
\frac 1 {i_1!\cdots i_k !}\ \HallDualPol{v_1}^{\ShuffleProduct i_1} \ShuffleProduct\cdots\ShuffleProduct\HallDualPol{w_1}^{\ShuffleProduct i_k}.
\end{align*}

\begin{proof}
(i) We proceed by induction on $i\in\NN$. For $i=0$, $\gamma_a(a) = 1$. Assume the assertion is satisfied for $i \leq j\in\NN$, and take $h = (\cdots(a,v_1)\cdots ,v_{j+1})\in\HallSetOfLength{a}{}$. By the definitions of $\gamma_a$ and $\HallPol{\cdot}$, $\HallPol{h}=[\cdots[a,v_1]\cdots ,v_{j+1}] = \gamma_a(h)\cdot a \Mod \Ideal{\aOne}$ and $\HallPol{(a,v_1)} = [a,v_1] = \gamma_a((a,v_1))\cdot a \Mod \Ideal{\aOne}$. By the inductive hypothesis, the latter is equal to $2\gamma_b(v_1)\cdot a \Mod \Ideal{\aOne}$, and it follows that 
\begin{align*}
\gamma_a(h)\cdot a \Mod \Ideal{\aOne} & = 2\gamma_b(v_1)\cdot
[\cdots[a,v_2]\cdots ,v_{j+1}] \Mod \Ideal{\aOne} \\ 
& =
2\gamma_b(v_1)\cdot 2^{j}\cdot\gamma_b(v_2)\cdots\gamma_b(v_{j+1})\cdot a \Mod \Ideal{\aOne},
\end{align*}
where the inductive hypothesis is again used in the last line. Thus $\gamma_a(h) = 2^{j+1}\cdot\gamma_b(v_1)\cdots\gamma_b(v_{j+1})$ and we are done.

\noindent (ii)
This is an immediate consequence of Proposition \ref{prop:HallDualPol}.  
\end{proof}

Using \eqref{eq:HallSetA} together with the above lemma we get

\begin{align*}
\sum_{h\in\HallSetOfLength{a}{}} \gamma_a(h)\  \HallDualPol{h}  
= a\left( 1 + \sum_{i=1}^\infty 2^i \sum_{\substack{v_1\geq\cdots\geq v_i \\ v_i\in\HallSetOfLength{b}{}}} \gamma_b(v_1) \cdots \gamma_b(v_i)\ \HallDualPol{v_1}\ShuffleWeightedProduct\cdots\ShuffleWeightedProduct \HallDualPol{v_i} \right).
\end{align*}
Now, using Lemmas \ref{lem:HallSetAndWords} and \ref{lem:FunctionGamma} with the notation $w_i=\Foliage{v_i}$, we get

\begin{align*}
\sum_{h\in\HallSetOfLength{a}{}} \gamma_a(h)\  \HallDualPol{h}  
=
 a\left( 1 + \sum_{i=1}^\infty 2^i \sum_{\substack{w_1\geq\cdots\geq w_i \\ w_i\in\HallWordsOfLength{b}{}}} \FunctionGamma{b}(w_1) \cdots \FunctionGamma{b}(w_i)\ \FunctionXi {w_1}\ShuffleWeightedProduct\cdots\ShuffleWeightedProduct \FunctionXi {w_i} \right),
\end{align*}
where $\FunctionGamma b(b) = 1$ and $\FunctionGamma b(av) = -(-2)^{|v|_a}\cdot 2^{|v|_b}$ for $av\in\HallWordsOfLength{b}{}$ ($v\in\HallWordsOfLength{c}{}$).
To end the proof of Theorem \ref{thm:MainPrim}(a), it remains to prove Proposition \ref{prop:ExpShuffle} which is the content of the following calculation:
\begin{align*}
\ExpShuffle{2\MainSeries} & = \EmptyWord + \sum_{i=1}^\infty \frac {2^i} {i!} \left(\sum_{w\in\HallWordsOfLength{b}{}} \FunctionGamma {b} (w)\, \FunctionXi{w}\right)^{\ShuffleProduct i} 
\\
&= \EmptyWord + \sum_{i=1}^\infty \frac {2^i} {i!}
\left(\sum_{w_1\in\HallWordsOfLength{b}{}} \FunctionGamma {b} (w_1)\, \FunctionXi{w_1}\right)
\ShuffleProduct\cdots\ShuffleProduct
\left(\sum_{w_i\in\HallWordsOfLength{b}{}} \FunctionGamma {b} (w_i)\, \FunctionXi{w_i}\right) 
\\
&=  \EmptyWord + \sum_{i=1}^\infty {2^i}
\sum_{w_1,\ldots,w_i\in\HallWordsOfLength{b}{}}
\FunctionGamma {b} (w_1)\cdots\FunctionGamma {b} (w_i)\, 
\frac{1}{i!}\FunctionXi{w_1} \ShuffleProduct\cdots\ShuffleProduct \FunctionXi{w_i}
\\
&= 1 + \sum_{i=1}^\infty 2^i \sum_{\substack{w_1 > \cdots > w_j \\ w_i\in\HallWordsOfLength{b}{}}} 
\sum_{\substack{(k_1,\ldots,k_j)\in\NN^j \\ k_1 + \ldots + k_j = i}}
(\FunctionGamma{b}(w_1))^{k_1} \cdots (\FunctionGamma{b}(w_j))^{k_j}
\\ 
&\qquad\qquad\qquad\qquad\qquad\qquad \times\frac{1}{k_1!\cdots k_j!}
\FunctionXi {w_1}^{\ShuffleProduct k_1}\ShuffleProduct\cdots\ShuffleProduct \FunctionXi {w_j}^{\ShuffleProduct k_j},
\\
&= 1 + \sum_{i=1}^\infty 2^i \sum_{\substack{w_1\geq\cdots\geq w_i \\ w_i\in\HallWordsOfLength{b}{}}} \FunctionGamma{b}(w_1) \cdots \FunctionGamma{b}(w_i)\ \FunctionXi {w_1}\ShuffleWeightedProduct\cdots\ShuffleWeightedProduct \FunctionXi {w_i}.
\end{align*}

\begin{remark}
Note that the above calculation is in fact very similar to the one Kawski and Sussmann gave proving their theorem \cite{Kawski97NoncommutativePower}.
\end{remark}

\subsection{End of proof of Theorem \ref{thm:MainPrim}}
\label{sec:End}

In this section we calculate the expression $\WordsInSeriesOfLength{c}{}$ and the second additional equality in the assertion of Theorem \ref{thm:MainPrim}. As a byproduct we also get Theorem \ref{thm:Riccati}.

At this point we know that (locally) the solution of the system 
\begin{align}
\label{eq:DifferentialEquationPrim}
\tag{\ref{eq:DifferentialEquation}}
\begin{split}
\dot x(t) &= u_c(t) X_c + u_b(t) X_b + u_a(t) X_a, \\
x(0) &= x_0 \in \Manifold.
\end{split}
\end{align}
can be written in the form
\begin{align}
\label{eq:SolutionPrim}
x(t) = \VectorFlow{\tilde\Xi_c(t) X_c} \VectorFlow{\Xi_b(t) X_b} \VectorFlow{\Xi_a(t) X_a} (x_0),
\end{align}
where
\begin{align*}
\tilde\Xi_c(t) =  \Upsilon^t \left(\sum_{h\in\HallSetOfLength{c}{}} \gamma_c(h)\  \HallDualPol{h}\right),
\end{align*}
and $\Xi_a(t) = \Upsilon^t(a\, \ExpShuffle{2\MainSeries)}, \Xi_b(t) = \Upsilon^t(\MainSeries)$.
Differentiating \eqref{eq:SolutionPrim} with respect to $t$ gives the following equation:
\begin{align*}
\dot x &= (-\Xi_a^2 e^{-2\Xi_b}\cdot\dot{\tilde\Xi}_c - 2\Xi_a\cdot\dot{\Xi}_b + \dot{\Xi}_a) X_a \\
&\quad +
(\Xi_a e^{-2\Xi_b}\cdot\dot{\tilde\Xi}_c + \dot{\Xi}_b) X_b \\
&\quad +
(e^{-2\Xi_b}\cdot\dot{\tilde\Xi}_c) X_c.
\end{align*}
Comparing this equation with \eqref{eq:DifferentialEquationPrim}, we see that
\begin{align}
\label{eq:Controls}
\begin{split}
u_a &= -\Xi_a^2 e^{-2\Xi_b}\cdot\dot{\tilde\Xi}_c - 2\Xi_a\cdot\dot{\Xi}_b + \dot{\Xi}_a, \\
u_b &= \Xi_a e^{-2\Xi_b}\cdot\dot{\tilde\Xi}_c + \dot{\Xi}_b, \\
u_c &= e^{-2\Xi_b}\cdot\dot{\tilde\Xi}_c.
\end{split}
\end{align}
Considering this as a linear system of equations in variables $\Xi_a, \Xi_b, \tilde\Xi_c$, it is easy to conclude that
 the first equation in \eqref{eq:Controls} can be rewritten in the form
\begin{align}
\label{eq:Riccati}
\dot{\Xi}_a = u_a + 2u_b\cdot \Xi_a - u_c\cdot \Xi_a^2.
\end{align}
Since $\Xi_a(0) = 0$, we have proved the following theorem.

\begin{theoremprim}
\label{thm:RiccatiPrim}
For  fixed measurable functions $u_a, u_b, u_c:[0,T]\to\RR$, the function $\Xi_a:[0,T]\to\RR$, defined in Theorem \ref{thm:Main} by $\Xi_a(t) = \Upsilon^t(a\, \ExpShuffle{2\MainSeries})$, is (locally) the solution of the Riccati equation
$\dot\Xi_a(t) = u_a(t) + 2u_b(t)\cdot \Xi_a(t) - u_c(t)\cdot \Xi_a^2(t), \,\Xi_a(0) = 0.$
\end{theoremprim}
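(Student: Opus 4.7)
The plan is to use the product decomposition already established in the proof of Theorem~\ref{thm:MainPrim}, namely
\[
x(t) = \VectorFlow{\tilde\Xi_c(t) X_c}\,\VectorFlow{\Xi_b(t) X_b}\,\VectorFlow{\Xi_a(t) X_a}(x_0),
\]
differentiate in $t$, and compare the result with the original system~\eqref{eq:DifferentialEquation}. This produces a $3\times 3$ triangular linear system relating $(\dot\Xi_a, \dot\Xi_b, \dot{\tilde\Xi}_c)$ to $(u_a, u_b, u_c)$; solving it for $\dot\Xi_a$ should yield precisely the claimed Riccati equation.

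Applying the chain rule to the three-fold composition, each parameter derivative contributes its associated vector field, pushed forward through the flows sitting to its left. The relevant pushforwards are computed from the standard identity $(\VectorFlow{sX})_* Y = \sum_{k\geq 0}\frac{(-s)^k}{k!}\mathrm{ad}_X^k(Y)$. Because of the $\aOne$ commutation relations~\eqref{eq:a1LieBrackets}, each such series truncates after at most three terms: $\mathrm{ad}_{X_b}$ acts diagonally on $X_a$ and $X_c$, while $\mathrm{ad}_{X_c}^2 X_b = 0$ and $\mathrm{ad}_{X_c}^3 X_a = 0$. Consequently $\dot x(t)$ becomes a concrete $\RR$-linear combination of $X_a, X_b, X_c$ evaluated at $x(t)$, with coefficients that are polynomials in $\Xi_a$ and $\tilde\Xi_c$ multiplied by $e^{\pm 2\Xi_b}$.

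Equating this expression with $u_a X_a + u_b X_b + u_c X_c$ and peeling off the coefficients triangularly, I would first read $\dot{\tilde\Xi}_c$ from the $X_c$-equation, substitute into the $X_b$-equation to express $\dot\Xi_b$, and finally substitute both into the $X_a$-equation. The crucial (and reassuringly nontrivial) point is that the $e^{\pm 2\Xi_b}$ factors and the variable $\tilde\Xi_c$ must drop out of the last equation, leaving exactly $\dot\Xi_a = u_a + 2 u_b\, \Xi_a - u_c\, \Xi_a^2$. The initial condition $\Xi_a(0) = 0$ is immediate, since $\Xi_a(0) = \Upsilon^0(a\,\ExpShuffle{2\MainSeries})$ and every iterated integral over $[0,0]$ vanishes.

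I expect the only real source of friction to be sign-bookkeeping in the adjoint expansions: the signs in $(\VectorFlow{sX})_* Y$ combined with $\mathrm{ad}_{X_c} X_b = -2 X_c$, $\mathrm{ad}_{X_b} X_a = -2 X_a$, $\mathrm{ad}_{X_c} X_a = X_b$, and $\mathrm{ad}_{X_c}^2 X_a = -2 X_c$ must be tracked precisely, since a single sign slip would replace $e^{-2\Xi_b}$ by $e^{2\Xi_b}$ and destroy the cancellation that delivers the Riccati equation. Beyond that, everything reduces to elementary linear algebra in the three-dimensional span of $\{X_a, X_b, X_c\}$.
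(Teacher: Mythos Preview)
Your proposal is correct and follows essentially the same route as the paper: differentiate the three-fold flow decomposition, compare coefficients against \eqref{eq:DifferentialEquation} to obtain the triangular system \eqref{eq:Controls}, and back-substitute to isolate $\dot\Xi_a$. The paper states the resulting system directly without spelling out the adjoint computation you describe, but the argument is otherwise identical.
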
 

Obviously, combining the second and the third equation in \eqref{eq:Controls}  we get
\begin{align}
\label{eq:TheTwo}
\dot{\Xi}_b &= u_b - u_c\cdot\Xi_a, &
\dot{\tilde\Xi}_c &= u_c\cdot e^{2{\Xi}_b}.
\end{align}
Integrating the first equation and using the definition \eqref{eq:Upsilon} of $\Upsilon^t$ we conclude
\begin{align*}
\Xi_b(t) &= \Upsilon^t(b - \WordsInSeriesOfLength{a}{}\, c). 
\end{align*}
Combining it with the defining formula $\Xi_b(t) = \Upsilon^t(\WordsInSeriesOfLength{b}{})$, we obtain $\WordsInSeriesOfLength{b}{} = b - \WordsInSeriesOfLength{a}{}\, c$ which is the second of the additional equality of Theorem \ref{thm:MainPrim}.
Finally, from the second equation in \eqref{eq:TheTwo}, we obtain that $\WordsInSeriesOfLength{c}{} = \ExpShuffle{2\MainSeries}\, c$. To see this, we use \eqref{eq:UpsilonIsAHomom} to get that
\begin{align*}
e^{2{\Xi}_b} = 1 + \sum_{k=1}^\infty \frac {2^k} {k!} \left(\Upsilon^t(\MainSeries)\right)^k = 
1 + \sum_{k=1}^\infty \frac {2^k} {k!} \Upsilon^t(\MainSeries^{\ShuffleProduct k}) =
\Upsilon^t(\ExpShuffle{2\MainSeries}).
\end{align*}
Therefore, integrating the second equality in \eqref{eq:TheTwo}, we conclude that
\begin{align*}
\tilde\Xi_c(t) = \int_0^t u_c(t_1)\cdot\Upsilon^{t_1}(\ExpShuffle{2\MainSeries})\, dt_1 = \Upsilon^{t_1}(\ExpShuffle{2\MainSeries}c),
\end{align*}
so $\WordsInSeriesOfLength{c}{} = \ExpShuffle{2\MainSeries}\, c$.

\begin{remark}
The system of three equations given by (\ref{eq:Riccati}) and (\ref{eq:TheTwo}) was actually considered by Redheffer \cite{Redheffer56Solutions} (see also \cite{Redheffer57Riccati}), who demonstrated a connection between this system and a  non-autonomous linear system on $\CC^2$  with $\LieAlgebra{sl}(2)$-matrix valued functions of time. Our results reproduce this phenomena and go further in producing a solution to the system,  i.e., Theorem \ref{thm:Riccati}.
\end{remark}

\subsection{Proof of Proposition \ref{prop:MainSeries} and Theorems \ref{thm:Main} and \ref{thm:Riccati}}

From Theorem \ref{thm:MainPrim} we easily conclude that 
\begin{align*}
\MainSeries = \WordsInSeriesOfLength{b}{} = b - \WordsInSeriesOfLength{a}{} c = b - a\,\ExpShuffle{2S}\, c,
\end{align*}
so $\MainSeries\in\Series{\RR}{\Letters}$ is a solution of the equation given in the proposition. We prove uniqueness of this solution. Let $\MainSeries = \sum_{n=1}^{\infty} \MainSeries_n$, where $\MainSeries_n\in\Series{\RR}{\Letters}$ is homogeneous of degree $n$, be a solution of the above equation. Then $\MainSeries_1 = b$, $\MainSeries_2 = -ac$, and for $n > 2$
\begin{align*}
\MainSeries_n = -a\left(2\MainSeries_{n-2} + \frac{2^2}{2!}\sum_{k+l = n-2}\MainSeries_k\ShuffleProduct\MainSeries_l + 
\frac{2^3}{3!}\sum_{k+l+m = n-2}\MainSeries_k\ShuffleProduct\MainSeries_l\ShuffleProduct\MainSeries_m +
\ldots\right)c,
\end{align*}
which gives us a recursive definition of the homogeneous summands in $\MainSeries$. Therefore $\MainSeries$ is defined uniquely.

Once again we emphasize that Theorem \ref{thm:MainPrim} together with Proposition \ref{prop:MainSeries} imply Theorem \ref{thm:Main}. It is also obvious that Theorem \ref{thm:RiccatiPrim} (proved in section \ref{sec:End}) is now equivalent to Theorem \ref{thm:Riccati}.

\subsection{Proof of Proposition \ref{prop:MainSeriesSymmetry}}

From Proposition \ref{prop:MainSeries} we know that the main series $\MainSeries\in\Series{\RR}{\Letters}$ is the unique solution of  
$\MainSeries = b - a\,\ExpShuffle{2\MainSeries}\, c.$ Since $\Antipode{(\cdot)}$ is an algebra antihomomorphism, and $\ShuffleProduct$ is commutative, we obtain $\Antipode \MainSeries = b - c\,\ExpShuffle{2\Antipode \MainSeries}\, a$. So
using the definition of $\FlipHom$,  we conclude that $\Antipode\FlipHom(\MainSeries) = b - a\,\ExpShuffle{2\Antipode\FlipHom(\MainSeries)}\, c$. It follows that $\Antipode\FlipHom(\MainSeries)$ satisfies the equation from Proposition \ref{prop:MainSeries}  for which $\MainSeries$ is the unique solution, so $\Antipode\FlipHom(S) = S$. Now the proposition follows easily from the equations established in Theorem \ref{thm:Main}.

\section{Concluding remarks}

In this article we presented the explicit solution of a general non-linear control-affine system of $\LieAlgebra{a}_1$-type. The solution was given in terms of the composition of three flows, with additional non-linear dependence on time, written with the use of certain well defined non-commutative power series $\MainSeries$ on three letters. 

Let us underline, that in order to formulate the results, we did not use the Hall basis structure, but only the set of Hall words. In other words, the solution of our problem does not depend on the edges of the trees. This fact makes the results easy to formulate, and it is also consistent with intuition, i.e., the solution should not depend on a chosen basis.

The solution given for the simplest simple algebra gives hope for similar solutions for other simple algebras. The first step is to find expressions for four rank-two simple Lie algebras of type $\LieAlgebra{a}_2, \LieAlgebra{b}_2, \LieAlgebra{c}_2$ and $\LieAlgebra{g}_2$. This will be a topic of the author's future research.

Finally, let us mention that the approach given in this article can not be easily generalized to the case of the special unitary algebra $\LieAlgebra{su}(2)$ (and $\LieAlgebra{su}(n)$), which would be helpful in studies of quantum (control) systems. The problem comes from the fact that generators of this Lie algebra (i.e., the Pauli matrices) are in a cyclic relation. It will definitely be interesting to overcome this problem.

\section*{Acknowledgements}

The author was partially supported by the Polish Ministry of Research and Higher Education grant N201 607540, 2011-2014.

\bibliographystyle{amsalpha}
\bibliography{bibliography}

\end{document}